\newcommand{\C}{{\mathbb C}}
\newcommand{\Z}{{\mathbb Z}}
\newcommand{\PP}{{\mathbb P}}
\newcommand{\F}{{\mathbb F}}
\newtheorem{thm}{Theorem}[section]
\newtheorem{prop}[thm]{Proposition}
\newtheorem{lem}[thm]{Lemma}
\theoremstyle{definition}
\newtheorem{defn}[thm]{Definition}
\newtheorem{clm}[thm]{Claim}
\newtheorem{rmrk}[thm]{Remark}
\newtheorem{nots}[thm]{Notation}
\newtheorem{corl}[thm]{Corollary}
\newcommand{\im}{\operatorname{im}}
\newcommand{\rank}{\operatorname{rank}}
\begin{document}

\title [Characterization of Line Arrangements]{ The Characterization of A Line Arrangement
 whose  Fundamental Group of the Complement is  a Direct Sum of Free
Groups}

\author[M. Eliyahu, E. Liberman,  M. Schaps, M. Teicher]
{Meital Eliyahu, Eran Liberman \\ Malka Schaps, Mina Teicher}

\email{\{eliyahm,liberme1,mschaps,teicher\}@macs.biu.ac.il}
\address{
Department of Mathematics, Bar-Ilan University, Ramat-Gan 52900,
Israel }

 \begin{abstract}  Kwai Man Fan proved 
 that if the intersection lattice of a line arrangement does not
contain a cycle, then the fundamental group of its complement  is
a direct sum of infinite and cyclic free groups. He also
conjectured  that the converse is true as well. The main purpose
of this paper is to prove this conjecture.\end{abstract} \maketitle


\section{Introduction}
An arrangement of lines is a finite collection of $\mathbb{C}$
-linear subspaces of dimension $1$. For such an arrangement
$\Sigma \subseteq {\mathbb{C}}^2$, there is a natural projective
arrangement $\Sigma^*$ of lines in $\mathbb{C}\mathbb{P}^2$
associated to it, obtained by adding to each line its
corresponding point at infinity. The problem of finding
connections between the topology of the differentiable structure
of $\mathbb{C}^2-\Sigma$ and the combinatorial theory of $\Sigma$
is one of the main problems in the theory of line arrangements
(see for example
\cite{CoSu}) . The main motivations for studying
the topology of $\mathbb{C}^2-\Sigma$ are derived from the areas
of hypergeometric functions, singularity theory and algebraic
geometry. Note that if $3$ points are on the same line we do not
consider it as  a cycle.

Given an arrangement $ \Sigma $, we define the graph $ G(\Sigma)$
which lies on the arrangement. Its vertices are the multiple
points (with multiplicity $ \geq 3$) and the edges are the
segments between multiple points on lines which pass through more
than one multiple point. If two lines occur to meet in a simple
point we ignore it (i.e. we do not consider it as a vertex of the
graph).

\medskip
In 1994, Jiang and Yau \cite{JY} defined the concept of a "nice"
arrangement. For $\Sigma$, they define a graph $G(V,E)$: The
vertices are the multiple points of $\Sigma$. $u,v$ are connected
if there exists $l \in \Sigma$ such that: $u,v \in l $. For $v \in
V$ define a subgraph $G_{\Sigma}(v)$: The vertex set is $v$ and
all his neighbors from $\Sigma$ (pay attention this definition
differs from the definition of fan).

$\Sigma$ is \textit{nice} if there is $ V' \subset V$ such that
$E_{\Sigma}(v) \cap  E_{\Sigma}(u) = \emptyset$ for all $u,v \in
V'$, and if we delete the vertex $v$ and the edges of its subgraph
$G_{\Sigma}(v)$ from $G$, for all $v \in V'$ we get a forest (i.e.
a graph without cycles).

 They have proven several properties of those arrangements:
\begin{enumerate}
\item[(a)] If $A_1,A_2$ are nice arrangements and their lattices
are isomorphic, then their complements $\mathbb{C}^2-A_1$ and
$\mathbb{C}^2-A_2$ are diffeomorphic. This property naturally
implies that $\pi_1({\C}^2 -A_1) \cong \pi_1({\C}^2 -A_2)$.

\item[(b)]As a consequence of (a), they showed that the
presentation of the fundamental group of the complement can be
written explicitly and depends only on the lattice of the line
arrangement.

\end{enumerate}
In 2005, Wang and Yau \cite{WY} continued this direction and
proved that the results of Jiang-Yau hold for a much larger family
of arrangements, which they call \textit{simple arrangements}.

 Falk  \cite{Flk} shows  several examples of line
arrangements with the same homotopy types but with different
lattices. Fan \cite{Fa1} proved that up to 6 lines the fundamental
group of a real line arrangement is determined by the lattice
Garber, Teicher and Vishne proved  it for a real line arrangement
with up to 8 lines.

Let $ G, H$ be groups that their abelianizations are free abelian
groups of finite rank.  Choudary, Dimca and Papadima \cite{CDP}
defined a set $\Phi$ of natural group isomorphisms
$\phi:G\rightarrow H $ (which they call \textit{$1$-marking}). In
the case of $G=\pi_1(\mathbb{C}^2-\Sigma)$ (where $\Sigma$ is an
affine arrangement), $\phi$ takes  the topological structure of
$\mathbb{C}^2-\Sigma$ into consideration. They prove that if $A,B$
are line arrangements, and B is "nice" (in the meaning of
Jiang-Yau), then the lattices are isomorphic if and only if there
is an isomorphism $\phi\in\Phi$ where
$\phi:\pi_1(\mathbb{C}^2-A)\widetilde{\rightarrow}
\pi_1(\mathbb{C}^2-B)$.


\vskip 0.7cm

 Fan \cite {Fa2} showed  that if the graph $G(\Sigma)$ is a forest (i.e. a graph
 without cycles),then the fundamental group is a direct sum of free groups.
 He also conjectured that the converse of his theorem is true. In
 \cite{Fa3} Fan proved that if the fundamental group of the
 complement is a direct sum of free groups, then the arrangement
 is composed of parallel lines.

 \vskip0.5cm

  In this paper, we prove his conjecture. Our theorem will state that if the
fundamental group is isomorphic to a direct sum of free groups,
then the graph has no cycles. We would like to emphasize that we
make no restrictions on our isomorphisms.

\medskip

The structure of the paper is as follows. In Section 2, we give
basic definitions related to groups. In section 3 we give basic
definitions related to line arrangements and the fundamental group
of the complement of line arrangements. In Section 4, we define a
function and a special set induced by it. Section 5 deals with
some special properties of fundamental groups of the complement of
line arrangements which are direct sum of free groups. In Section
6, we prove the main result of the paper.

\section{Definitions and notations}
This section presents the needed definitions for the paper.

\subsection{Lower Central Series}
We start by defining the \emph{lower central series of a group $
G$} which will be used throughout the paper.

\begin{defn}{\textbf{ Commutator group and  lower central series} }

Let $G$  be a group . The \textit{commutator group} of $G$ is
\[G'=G_2=[G,G]=\left\langle\{aba^{-1}b^{-1}|a,b\in G\} \right\rangle\]
The subgroup $G'$ is normal in $ G $ with an abelian quotient.
  We can define the \textit{lower central series of $G$} recursively:
\[ G_1=G \]
\[ G_2= [G,G]\]
\[ G_3=[G,G_2] \]
\[\vdots\]
\[G_n=[G,G_{n-1}].\]
Since  $G_{n+1} $ contains the commutators of $G_{n}$, we have $
G_{n+1} \triangleleft G_n$ and the quotient $G_n/G_{n+1}$ is
abelian for  all $n\in \mathbb{N}$.

\end{defn}
To understand these groups, the following identities are needed.
\begin{prop} {\textbf{ Witt-Hall Identities } }\cite{MKS}\label{WHI}
\begin{enumerate}
\item$[a,b][b,a]=e;$ \item$[a,bc]=[a,b][a,c][[c,a],b];$
\item$[ab,c]=[a,[b,c]][b,c][a,c] .$

\end{enumerate}
\end{prop}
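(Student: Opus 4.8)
The plan is to prove all three identities by direct computation, substituting the definition $[a,b]=aba^{-1}b^{-1}$ recorded in the preceding definition and simplifying. Since each identity is an equation between words in the letters $a,b,c$, it suffices to verify it as an identity in the free group on these three generators; the general case for an arbitrary group $G$ then follows immediately by applying the substitution homomorphism sending the free generators to any chosen elements of $G$. Consequently no structural hypothesis on $G$ is needed, and the entire argument reduces to formal cancellation of adjacent inverse pairs.

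For identity (1), I would simply write $[a,b][b,a]=(aba^{-1}b^{-1})(bab^{-1}a^{-1})$ and cancel successively: the central pair $b^{-1}b$ collapses, then $a^{-1}a$, then $bb^{-1}$, leaving $aa^{-1}=e$.

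For identities (2) and (3) the strategy is to expand the right-hand side and isolate a sub-product that telescopes to the identity, which is what makes the cancellation manageable. For (2) I would first compute $[a,c]\,[[c,a],b]$: writing $[a,c]=aca^{-1}c^{-1}$ and $[[c,a],b]=(cac^{-1}a^{-1})\,b\,(aca^{-1}c^{-1})\,b^{-1}$, the juxtaposed factor $(aca^{-1}c^{-1})(cac^{-1}a^{-1})$ cancels completely to $e$, so that $[a,c]\,[[c,a],b]=b(aca^{-1}c^{-1})b^{-1}$. Left-multiplying by $[a,b]=aba^{-1}b^{-1}$ then triggers a further run of cancellations ($b^{-1}b$, then $a^{-1}a$), yielding $abca^{-1}c^{-1}b^{-1}$, which is precisely $[a,bc]=a(bc)a^{-1}(bc)^{-1}$. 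Identity (3) is handled symmetrically: computing $[a,[b,c]]\,[b,c]$ first, the middle factor $[b,c]^{-1}[b,c]$ collapses and leaves $a(bcb^{-1}c^{-1})a^{-1}$, and right-multiplying by $[a,c]$ gives $abcb^{-1}a^{-1}c^{-1}=[ab,c]$.

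The computations are elementary, so the only genuine obstacle is bookkeeping: keeping track of the numerous inverse letters and performing the cancellations in the correct order. Once the telescoping sub-products are identified the work is entirely mechanical, and since these identities are classical (see \cite{MKS}), I would present the verification concisely rather than belabor every cancellation.
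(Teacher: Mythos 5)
Your proposal is correct: all three expansions and cancellations check out under the paper's convention $[a,b]=aba^{-1}b^{-1}$, and the reduction to the free group on $a,b,c$ via a substitution homomorphism is sound. The paper itself offers no proof, simply citing \cite{MKS}, and your direct verification is exactly the standard argument found there, so there is nothing to reconcile.
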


  From the second and third identities  we get
\begin{lem}
Let $G$ be a group and let $\{x_1,\ldots ,x_k\}$ be the generators
of $G$.\break Then:
$$G_2/G_3= \langle[x_i,x_j]|
  \hskip 0.3cm i\neq j ,\hskip 0.3cm 1 \leq i,j \leq k\rangle.$$
  \end{lem}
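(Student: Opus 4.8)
The plan is to show that $G_2/G_3$ is generated by the images of the basic commutators $[x_i,x_j]$ with $i \neq j$, using the Witt-Hall identities to reduce every element of $G_2$ modulo $G_3$ to a product of such commutators. Let me sketch the argument.

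First I would recall that $G_2 = [G,G]$ is generated as a group by all commutators $[a,b]$ with $a,b \in G$. Since the $x_i$ generate $G$, any $a,b \in G$ can be written as words in the $x_i^{\pm 1}$. Thus to generate $G_2/G_3$ it suffices to understand $[a,b] \bmod G_3$ when $a$ and $b$ range over products of the generators. The strategy is to expand such commutators using the identities in Proposition~\ref{WHI} and observe that the "correction terms" all lie in $G_3$, so they vanish in the quotient.

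The key computation proceeds as follows. By identity (2), $[a,bc] = [a,b][a,c][[c,a],b]$, and since $[c,a] \in G_2$ the term $[[c,a],b] = [b,[c,a]]^{-1} \in [G,G_2] = G_3$. Hence modulo $G_3$ we get $[a,bc] \equiv [a,b][a,c]$, so the commutator is additive (bilinear) in its second argument modulo $G_3$. Symmetrically, identity (3) gives $[ab,c] = [a,[b,c]][b,c][a,c]$, and $[a,[b,c]] \in [G,G_2] = G_3$, so $[ab,c] \equiv [a,c][b,c] \pmod{G_3}$, giving bilinearity in the first argument. Combining these, together with identity (1) which shows $[b,a] \equiv [a,b]^{-1}$, one can expand $[a,b]$ for arbitrary words $a,b$ into a product of terms $[x_i,x_j]^{\pm 1}$ modulo $G_3$. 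The cases with $i=j$ drop out because $[x_i,x_i]=e$, leaving exactly the generators $[x_i,x_j]$ with $i \neq j$.

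I would organize this as an induction on the total length of the words $a$ and $b$: the base case is a single commutator of generators, and the inductive step peels off one generator at a time using identity (2) or (3), each time discarding a correction term that lies in $G_3$. The main obstacle, though a mild one, is bookkeeping the inverses and ensuring that the correction terms genuinely land in $G_3 = [G, G_2]$; this requires noting that any term of the form $[u, [v,w]]$ or $[[v,w], u]$ has an entry in $G_2$ and hence lies in $[G,G_2]=G_3$. Once that is checked, the result follows immediately since every generator $[a,b]$ of $G_2$ reduces modulo $G_3$ to a product of the claimed elements.
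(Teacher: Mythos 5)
Your proposal is correct and follows essentially the same route as the paper: both reduce commutators of arbitrary words modulo $G_3$ using the Witt--Hall identities, observing that the correction terms $[a,[b,c]]$ and $[[c,a],b]$ lie in $[G,G_2]=G_3$, so that $[\,\cdot\,,\cdot\,]$ becomes bilinear in the quotient and $G_2/G_3$ is generated by the $[x_i,x_j]$, $i\neq j$. The paper only spells out the expansion in the first argument ($[xy,z]\equiv[y,z][x,z]$ via a direct computation), leaving the second argument, inverses, and the induction implicit, so your version is in fact the more complete write-up of the same argument.
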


\begin{proof} If $  [xy,z]\in G_2$, then
\[[xy,z]=xyz(xy)^{-1}z^{-1}=xyzy^{-1}x^{-1}z^{-1}=x[y,z]x^{-1}[x,z].\]

Over $G_3$, we know that  $[x,[y,z]]= x[y,z]x^{-1}[y,z]^{-1}
=e\Rightarrow
 [y,z]=x[y,z]x^{-1} .$
 We get
\[[xy,z]=[y,z][x,z] \hskip 0.2cm \mbox{over $G_3.$ }\]
This result means that $G_2/G_3$ is finitely generated by the set
$$\{[x_i,x_j]|i \neq j , 1 \leq i,j\leq k\}$$, where $
{x_1,\ldots,x_k}$ are  the generators of $G$.
\end{proof}

\begin{prop}\label{glob rep}
Let $G=\left\langle x_1,\ldots,x_k \middle| R\right \rangle$,
where the relations $R$ are commutator type relations (i.e. every
relation $r \in R $ can be written as $r=[w_1,w_2],w_1,w_2 \in G$
). Then
$$G_2 /G_3  = \left\langle [ x_i ,x_j ] \middle|
\begin{gathered}
  {[ x _k ,x _l  ] =  {[ x _l ,x _k ]}^{-1}} \hfill, \\
  \text{all  generators  commute} \hfill,\\
  R^*  \hfill \\
\end{gathered}    \right \rangle$$
where $R^*$ is set of the relations $R$  written by means of the
generators of $G_2$, taken modulo $G_3$.
\end{prop}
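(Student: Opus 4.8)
The plan is to realize $G_2/G_3$ as an explicit quotient of the corresponding object for a free group, and then to read off the presentation. Let $F$ denote the free group on $x_1,\dots,x_k$, so that $G=F/N$, where $N$ is the normal closure in $F$ of the relators $R$. Since every $r\in R$ is a commutator, it lies in the commutator subgroup $F_2$; as $F_2$ is normal in $F$, every conjugate of every relator again lies in $F_2$, and hence $N\subseteq F_2$.

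First I would record the standard fact that the projection $\pi\colon F\to G$ carries the lower central series onto that of $G$, i.e. $G_n=\pi(F_n)=F_nN/N$ for all $n$; this follows by induction from $[\pi(A),\pi(B)]=\pi([A,B])$ together with the surjectivity of $\pi$. Consequently
$$G_2/G_3=\frac{F_2N/N}{F_3N/N}\cong\frac{F_2N}{F_3N}\cong\frac{F_2}{F_2\cap F_3N}.$$
Since $F_3\subseteq F_2$ and $N\subseteq F_2$, Dedekind's modular law gives $F_2\cap F_3N=F_3(N\cap F_2)=F_3N$, so that $G_2/G_3\cong F_2/F_3N$.

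Next I would invoke the classical description of $F_2/F_3$ for a free group \cite{MKS}: it is the free abelian group with basis the basic commutators $\{[x_i,x_j]\mid i<j\}$. In terms of all the commutators $[x_i,x_j]$ with $i\neq j$ (the generating set furnished by the previous Lemma), this is encoded precisely by the relations ``all generators commute'' (abelianness of the quotient) together with $[x_k,x_l]=[x_l,x_k]^{-1}$, the latter being identity (1) of Proposition~\ref{WHI} read modulo $F_3$. It then remains to identify the image of $N$ inside $F_2/F_3$. Here the key observation is that for $c\in F_2$ and $f\in F$ one has $fcf^{-1}c^{-1}=[f,c]\in[F,F_2]=F_3$, so conjugation acts trivially on $F_2/F_3$. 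Therefore the image of the normal closure $N$ is already generated by the images of the relators $r\in R$ themselves, and these images are exactly the elements of $R^*$ (each $r$, being a commutator, expanded in the generators $[x_i,x_j]$ and reduced modulo $G_3$ by means of identities (2) and (3) of Proposition~\ref{WHI}). Quotienting $F_2/F_3$ by $\langle R^*\rangle$ yields the asserted presentation.

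The step I expect to be the main obstacle is the passage $G_2/G_3\cong F_2/F_3N$ and, within it, the claim that only the relators $R$ (and not all of their conjugates) survive in the abelian group $F_2/F_3$; this is where the normality of $F_2$, Dedekind's law, and the triviality of conjugation modulo $F_3$ must all be combined carefully. The remaining verifications---that the listed relations genuinely hold in $G_2/G_3$---are immediate from Proposition~\ref{WHI} and the fact that every successive quotient in the lower central series is abelian.
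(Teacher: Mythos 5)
Your proof is correct, and there is in fact nothing in the paper to compare it against: the paper states Proposition~\ref{glob rep} bare, with no proof at all (it is later invoked in the lemma ``An implementation for line arrangements'' as if immediate), so your write-up supplies an argument the authors omitted. What you do is the standard free-group reduction, and the two genuinely load-bearing points are both present: that a surjection $\pi\colon F\to G$ carries the lower central series onto the lower central series, giving $G_n=F_nN/N$; and that conjugation acts trivially on $F_2/F_3$ because $[F,F_2]=F_3$, so the image of the normal closure $N$ in $F_2/F_3$ is generated by the images of the relators $r\in R$ alone, which is exactly what makes $R^*$ (rather than all conjugates of $R$) the correct relation set. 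Combined with the Basis Theorem (which the paper itself quotes from Hall), identifying $F_2/F_3$ as free abelian on $\{[x_i,x_j]\mid i<j\}$ --- precisely the group presented by the symmetry relations $[x_k,x_l]=[x_l,x_k]^{-1}$ and commutativity on the generators $[x_i,x_j]$, $i\neq j$ --- this yields the stated presentation. One simplification: the appeal to Dedekind's modular law is unnecessary, since $N\subseteq F_2$ gives $F_2N=F_2$ and $F_3N\subseteq F_2$ outright, so $G_2/G_3=(F_2/N)\big/(F_3N/N)\cong F_2/F_3N$ in a single application of the third isomorphism theorem, and $F_2\cap F_3N=F_3N$ trivially. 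With that trimmed, your argument is exactly the proof the proposition deserves.
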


\medskip
The next Definition  and theorem will give us a better
understanding of $G_2/G_3$ and help us in the future.

\begin{defn} [\cite{Ha}, p.165]Let $G$ be a group generated by $x_1,\ldots,x_r$. We consider
formal words or strings $b_1\cdot b_2 \cdots b_n$ where each $b$
is one of the generators. We also introduce formal commutators
$c_j$ and weights $ \omega(c_j)$ by the rules:
\begin{enumerate}
\item $c_i=x_i, i=1,\ldots,r$ are the commutators of weight 1;
i.e. $\omega(x_i)=1$.
\item If $c_i$ and $c_j$ are commutators,
then $c_k=[c_i,c_j]$ and $\omega(c_k)=\omega(c_i)+\omega(c_j)$.
\end{enumerate}
\end{defn}

\begin{thm}{\bf Basis Theorem}\cite{Ha}
If F is the free group with free generators $ y_1,...,y_r$ and if
in a sequence of basic commutators $c_1,\ldots ,c_t$ are those of
weights  $1,2,\ldots,n$ then  an arbitrary element $f\in F$ has a
unique representation $f=c_1^{e_1}c_2^{e_2}\cdots c_t^{e_t}\: {\bf
mod} \: F_{n+1}$.\\The  basic commutators of weight $n$ form a
basis for the free abelian group $F_n/F_{n+1}$.
\end{thm}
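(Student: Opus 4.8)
The plan is to prove the two assertions separately: first that the basic commutators of weights $1,\ldots,n$ generate $F/F_{n+1}$, so that the stated collected form exists; and then that the basic commutators of weight exactly $n$ are \emph{independent} in $F_n/F_{n+1}$, which simultaneously yields uniqueness of the collected representation and the basis statement. The spanning part is essentially a normalization procedure, while the independence part is where the real content lies.

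For the existence (spanning) half, I would run the \emph{collection process} of P.~Hall. Starting from an arbitrary word in $y_1,\ldots,y_r$ and their inverses, one repeatedly applies the Witt--Hall identities of Proposition~\ref{WHI} to bring the lowest basic commutator to the front and collect its occurrences. The essential move rewrites an out-of-order product $c_j c_i$, with $c_i$ preceding $c_j$ in the chosen ordering, as $c_i c_j$ multiplied by commutators of strictly larger weight; these higher-weight correction terms are in turn re-expressed as products of basic commutators. Since every commutator of weight $>n$ lies in $F_{n+1}$, the process terminates modulo $F_{n+1}$ and produces a representation $f = c_1^{e_1}\cdots c_t^{e_t} \pmod{F_{n+1}}$. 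Intersecting with $F_n$ shows that the basic commutators of weight $n$ generate the abelian group $F_n/F_{n+1}$.

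The harder half is independence, and here I would count and then embed. A generating-function together with a Möbius-inversion argument shows that the number of basic commutators of weight $n$ on $r$ generators equals the Witt number $M_r(n)=\frac{1}{n}\sum_{d\mid n}\mu(d)\,r^{n/d}$. Next I would use the \emph{Magnus embedding} $y_i \mapsto 1 + X_i$ of $F$ into the ring of formal power series in non-commuting indeterminates $X_1,\ldots,X_r$. Under this map a commutator of weight $n$ has lowest-degree term of degree $n$, and that term is a Lie element; passing to the associated graded of the lower central series realizes $F_n/F_{n+1}$ inside the degree-$n$ component of the free Lie ring, whose rank is again $M_r(n)$. Since the images of the weight-$n$ basic commutators span a subgroup of the correct rank inside a free abelian group of that very rank, no nontrivial relations among them can survive, so $F_n/F_{n+1}$ is free abelian of rank $M_r(n)$ with the weight-$n$ basic commutators as a basis.

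The main obstacle is exactly this independence step. The collection process by itself only gives the upper bound that the weight-$n$ basic commutators generate, and there is no elementary reason internal to the group that forbids hidden relations among them; one genuinely needs the Magnus embedding and the dimension formula for the homogeneous components of the free Lie algebra to match the combinatorial count. Once the ordering conventions are fixed, everything else, namely the collection identities, the termination argument modulo $F_{n+1}$, and the enumeration of basic commutators, is careful bookkeeping.
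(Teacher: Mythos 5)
The paper does not prove this statement at all---it is quoted from Hall's \emph{Theory of Groups} \cite{Ha} as a known result---and your outline is essentially Hall's own classical proof: the collection process yields spanning modulo $F_{n+1}$, and independence follows from the Magnus embedding $y_i\mapsto 1+X_i$ together with the Witt count $M_r(n)=\frac{1}{n}\sum_{d\mid n}\mu(d)\,r^{n/d}$ matching the rank of the degree-$n$ homogeneous component of the free Lie ring, so that the weight-$n$ basic commutators, which span, must be a basis. Your proposal is correct; the only point worth making explicit is that uniqueness of the full collected form $c_1^{e_1}\cdots c_t^{e_t}$ mod $F_{n+1}$ requires independence at every weight $k\le n$, applied successively along the lower central filtration, not just at weight $n$---but that is the same argument run for each $k$.
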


\subsection{Line Arrangements}
\begin{defn} \textbf{Line arrangement.}
\newline
A line arrangement $\Sigma=\{L_1,\ldots, L_s\} \subseteq {\C}^2$
is a union of copies of ${\C}^1$.
\end{defn}
%
%
%
%
%
%
%
%
%
%
%

\begin{rmrk}
Each time we are mentioning a line arrangement $\Sigma
\subseteq{\C}^2$, we assume that there are no parallel lines in
$\Sigma$. This assumption does not restrict us, since we can
project every line arrangement in $\C{\mathbb P}^2$ to $\C^2$ such
that the new arrangement does not contain parallel lines.
\end{rmrk}

\begin{defn}Let $\Sigma$ be a line arrangement.
An intersection point in $\Sigma$ is called \textit{simple} if
there are precisely two lines which meet at that point. Otherwise,
we call it \textit{multiple}.
\end{defn}

\begin{defn}{\textbf{A Cycle}}

A cycle is a non empty ordered set of multiple intersection points
$ \{p_1,\dots,p_k\}$, such that any pair of adjacent points $p_j,
p_{j+1}$ and the points $p_1,p_k$ are connected by lines of the
arrangement. Moreover, if $1 \leq j\leq k-2$, then the line
connecting $p_j$ to $p_{j+1}$ and the line connecting $p_{j+1}$ to
$p_{j+2}$ are different. Also, the line connecting $p_{k-1}$ to
$p_k$ is different from the line connecting $p_k$ to $p_1$.

\end{defn}

Let $\Sigma\subseteq \C \mathbb{P}^2$ be a line arrangement. The
invariant $\beta(\Sigma)$ which is defined in \cite{Fa2}, counts
the number of independent cycles in the graph.

Fan \cite{Fa2} showed that $\beta(\Sigma)=0 $ if and only if $G(\Sigma)$ has no cycles.

In an analogous way for $\Sigma\subseteq {\C}^2$, we define
$\beta(\Sigma):=\beta(\Sigma \cup L_{\infty}) $ where $L_{\infty}$
is the projective line at infinity of ${\C}^2$ and $G(\Sigma):=
G(\Sigma \cup \L_{\infty}) $. Note that the new definitions are
well defined.
\medskip

 One of the most important invariants of a line arrangement is the fundamental group of its
complement, denoted by $\pi_1({\C}^2 - \Sigma)$.

The next lemma presents its computation.

\begin{lem}{\textbf{Constructing the Fundamental Group} }  \label{BFG}(\cite {Arv},\cite{OT} \cite[p.304]{CoSu})
Let  $\Sigma=\{ L_1,\ldots,L_n\} \subseteq {\C}^2 $ be a line
arrangement, that is enumerated as in \cite{CoSu}.

We associate a generator $\Gamma_i$ to each line $L_i$ such that
$$G=\pi_1(\C^2-\Sigma)=\langle\Gamma_1,\dots,\Gamma_n|R \rangle,$$
where $R$ is a set of relations generated as follows.

Every intersection point of lines $L_{i_1},\ldots, L_{i_m}$
creates a set of relations
\[{\Gamma_{i_1}^{x_1}}{\Gamma_{i_2}^{x_2}}\cdots{\Gamma_{i_m}^{x_m}}={\Gamma_{i_m}^{x_m}}
{\Gamma_{i_1}^{x_1}}\cdots{\Gamma_{i_{m-1}}^{x_{m-1}}}={\Gamma_{i_2}^{x_2}}\cdots{\Gamma_{i_m}^{x_m}}{\Gamma_{i_1}^{x_1}}\]
where $x_i\in G$ and $\Gamma_i^{x_i}={x_i}^{-1}\Gamma_i x_i $.

It is easy to see that this set is equivalent to the following
set:

 $$[\Gamma_{i_j}^{x_j},{\Gamma_{i_1}^{x_1}}\cdots{\Gamma_{i_m}^{x_m}}]=e , 1\leq j \leq m.$$
\end{lem}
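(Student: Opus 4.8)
The presentation itself — that $\pi_1(\C^2-\Sigma)$ is generated by the meridians $\Gamma_1,\dots,\Gamma_n$ subject only to the relations read off from the intersection points — is the content of the Zariski--van Kampen theorem in the form worked out by Arvola and by Cohen--Suciu, so for that part I would simply invoke \cite{Arv}, \cite{OT}, \cite[p.304]{CoSu}. The only thing genuinely left to verify is the final assertion: that at a fixed intersection point the ``cyclic'' relations and the ``commutator'' relations generate the same normal subgroup. To keep the notation light I would abbreviate $a_k:=\Gamma_{i_k}^{x_k}$ and write $P:=a_1a_2\cdots a_m$ for the full local product; the conjugating elements $x_k\in G$ play no role in what follows, since the argument manipulates the $a_k$ only as abstract group elements.

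The plan rests on one elementary identity. For $1\le k\le m$ let $P_k:=a_k a_{k+1}\cdots a_m a_1\cdots a_{k-1}$ be the cyclic rotation of the word that begins with $a_k$, so that $P_1=P$ and the intersection-point relations are exactly $P_1=P_2=\cdots=P_m$. A direct computation gives $a_k^{-1}P_k a_k=P_{k+1}$, that is, each rotation is obtained from the previous one by conjugation by the corresponding generator. Granting this, both directions are immediate. If all the $P_k$ are equal, then from $P_k=P_{k+1}=a_k^{-1}P_k a_k$ we read off $[a_k,P_k]=e$, and since $P_k=P$ this is $[a_k,P]=e$ for every $k$; conversely, assuming $[a_k,P]=e$ for all $k$, an induction on $k$ using $P_{k+1}=a_k^{-1}P_k a_k$ shows $P_k=P$ for all $k$, which is precisely the system of cyclic relations. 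Note also that the $m$ commutator relations are not independent: once $a_1,\dots,a_{m-1}$ commute with $P$, so does $a_m=a_{m-1}^{-1}\cdots a_1^{-1}P$, matching the $m-1$ independent cyclic relations.

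I expect no serious obstacle here --- this is exactly the ``easy to see'' step --- and the only care needed is bookkeeping: checking the conjugation identity $a_k^{-1}P_k a_k=P_{k+1}$, which simply says that conjugating $P_k$ by its leading letter $a_k$ cancels it in front and reproduces it at the end, and making sure the indices wrap around correctly modulo $m$. The one conceptual point worth stating explicitly is that the equivalence is purely formal, taking place already at the level of the free group on $a_1,\dots,a_m$, so it is unaffected by the presence of the conjugators $x_k$ and by the remaining relations of $G$; hence replacing the cyclic relations by the commutator relations at every intersection point really does leave $\pi_1(\C^2-\Sigma)$ unchanged.
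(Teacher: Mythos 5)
Your proposal is correct and matches the paper exactly: the paper gives no proof of this lemma, deferring the presentation itself to the cited references \cite{Arv}, \cite{OT}, \cite{CoSu} and dismissing the equivalence of the cyclic and commutator relation sets as ``easy to see,'' which is precisely the division of labor you adopt. Your verification of that equivalence via the rotation identity $a_k^{-1}P_k a_k = P_{k+1}$, with the induction $P_k = P \Rightarrow P_{k+1} = a_k^{-1}Pa_k = P$, is the standard formal argument and is correct, including the observation that it takes place in the free group and is therefore insensitive to the conjugators $x_k$ and the other relations.
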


We get a similar presentation for a real arrangement by using the
Moishezon-Teicher algorithm  \cite {MoTe1} and the van-Kampen
Theorem \cite{VK}.

\begin{nots}
We denote by $\mathcal  {P}$ the set of  intersection points. For
$p \in \mathcal{P}$, we denote by $\Gamma(p)$ the set of generators
attached to the lines passing through the point ${p}$ and by
${\Gamma(p)}^c$ the set of generators attached to the lines not
passing through the point ${p}$.

\end{nots}

\section{decomposition of $G_2/G_3$}


Let $G$ be a group. The abelianization of $G$, denoted by
$\overline{G}$, is $\overline{G}=G/G_2$. If $g \in G$, we denote
$\overline{g}=g\cdot G_2 $.

\begin{rmrk}
Let $\Sigma \in {\C}^2$ be a line arrangement, and let $p \in
\mathcal{P}$.\break Then, $G=\pi_1({\C}^2- \Sigma)=\langle
\Gamma(p), {\Gamma(p)}^c | R\rangle$, $$G/G_2=\overline{G}=Ab(G)=
\langle \overline{\Gamma(p)}, \overline{\Gamma(p)^c
}|R,[x,y]=e,x,y \in \Gamma(p)\cup \Gamma(p)^c \rangle .$$
\end{rmrk}

This Lemma is an immediate implementation of the last section.

\begin{lem}{ \textbf{An implementation for line arrangements.}  }
Let $\Sigma$ be a line arrangement, then the abelian group $G_2
/G_3 $ can be written  as
$$G_2/G_3=\left\langle[\Gamma_i,\Gamma_j] \left|
\begin{gathered}
{[\Gamma _i ,\Gamma _j ] =  [\Gamma _j ,\Gamma _i ]^{-1}}, \hfill
\\ [\Gamma _i ,\Gamma _j][\Gamma _k ,\Gamma _l] =  [\Gamma _k ,\Gamma _l][\Gamma _i ,\Gamma
_j],\\
\prod\limits_{\Gamma_x \in \Gamma(p)}{[\Gamma_x,\Gamma_y],p \in   \mathcal{P},\Gamma_y\in \Gamma(p)}\hfill \\
\end{gathered} \right. \right\rangle.$$
\end{lem}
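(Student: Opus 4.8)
The plan is to apply Proposition \ref{glob rep} to the specific presentation of $G = \pi_1(\C^2 - \Sigma)$ furnished by Lemma \ref{BFG}. The key observation is that the relations $R$ coming from the intersection points are already given in commutator form, namely $[\Gamma_{i_j}^{x_j}, \Gamma_{i_1}^{x_1}\cdots\Gamma_{i_m}^{x_m}] = e$, so Proposition \ref{glob rep} applies verbatim and tells us that $G_2/G_3$ is generated by the classes $[\Gamma_i, \Gamma_j]$ subject to three families of relations: the antisymmetry relations $[\Gamma_i,\Gamma_j] = [\Gamma_j,\Gamma_i]^{-1}$, the commutativity relations (all generators of $G_2/G_3$ commute), and the images $R^*$ of the point-relations $R$ rewritten in terms of the $[\Gamma_i,\Gamma_j]$ and read modulo $G_3$.

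The substantive step is to identify $R^*$ explicitly as the products $\prod_{\Gamma_x \in \Gamma(p)} [\Gamma_x, \Gamma_y]$ claimed in the statement. First I would note that modulo $G_2$ each conjugate $\Gamma_i^{x_i}$ is congruent to $\Gamma_i$, and since we are computing a commutator and then reducing modulo $G_3$, the conjugating elements $x_i$ can be dropped: a commutator $[\Gamma_i^{x_i}, w]$ agrees with $[\Gamma_i, w]$ modulo $G_3$. Hence the relation attached to a point $p$ with lines $\Gamma(p) = \{\Gamma_{i_1},\dots,\Gamma_{i_m}\}$ becomes, modulo $G_3$, the statement that $[\Gamma_{i_j}, \Gamma_{i_1}\cdots\Gamma_{i_m}] = e$ for each $j$. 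Then I would expand the second Witt--Hall identity (Proposition \ref{WHI}) repeatedly to rewrite $[\Gamma_{i_j}, \Gamma_{i_1}\cdots\Gamma_{i_m}]$ as the product $[\Gamma_{i_j},\Gamma_{i_1}][\Gamma_{i_j},\Gamma_{i_2}]\cdots[\Gamma_{i_j},\Gamma_{i_m}]$ modulo $G_3$, exactly as was done in the proof of the lemma preceding Proposition \ref{glob rep}; the nested correction terms $[[c,a],b]$ lie in $G_3$ and so vanish. Recognizing that the factor $[\Gamma_{i_j},\Gamma_{i_j}] = e$ drops out, each such relation is precisely $\prod_{\Gamma_x \in \Gamma(p)} [\Gamma_x, \Gamma_y] = e$ with $\Gamma_y = \Gamma_{i_j} \in \Gamma(p)$, which matches the asserted third family.

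Finally I would check the range of $\Gamma_y$: the statement allows $\Gamma_y \in \Gamma(p)$, and indeed the point $p$ produces one relation for each choice of $j$, i.e. for each line through $p$, so $\Gamma_y$ ranges over all of $\Gamma(p)$. I would also remark that the generators of $G_2/G_3$ are the brackets $[\Gamma_i,\Gamma_j]$ with $i \neq j$ (by the lemma following the definition of the lower central series), which is why only those appear.

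The part requiring the most care is the passage from the conjugated generators $\Gamma_i^{x_i}$ to the plain generators $\Gamma_i$ modulo $G_3$, together with the repeated application of the Witt--Hall expansion to a product of arbitrary length $m$; the main obstacle is bookkeeping the higher commutator correction terms and confirming that each one genuinely lands in $G_3$ so that it may be discarded. Once that is verified, the identification of $R^*$ with the claimed products is a direct substitution, and the result follows immediately from Proposition \ref{glob rep}.
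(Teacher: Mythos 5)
Your proposal is correct and follows exactly the paper's own route: the paper's proof is the one-line remark that the lemma is ``a simple implementation of Proposition \ref{glob rep} on the presentation of $G$ from Lemma \ref{BFG},'' and your argument is precisely that implementation, with the details (dropping the conjugating elements $x_i$ modulo $G_3$ and the iterated Witt--Hall expansion identifying $R^*$ with the products $\prod_{\Gamma_x \in \Gamma(p)}[\Gamma_x,\Gamma_y]$) correctly filled in.
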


\begin{proof} A simple implementation of Proposition \ref{glob rep} on the
presentation of $G$ from Lemma \ref{BFG}.
\end{proof}

\begin{rmrk}\label{ds}
We can see that if $\Gamma_1$ and $\Gamma_2$ are associated with
lines meeting in one point and $\Gamma_3$ and $\Gamma_4$ are
associated with lines meeting in a different  point,  there is no
relation combining $[\Gamma_1,\Gamma_2]$ and
$[\Gamma_3,\Gamma_4].$ Therefore,  $$G_2
/G_3=\bigoplus\limits_{p\in   \mathcal{P}}C_p$$ where
$C_p=\left\langle[\Gamma_i,\Gamma_j],\Gamma_i,\Gamma_j\in
\Gamma(p) \left|  \begin{gathered}
{[\Gamma _i ,\Gamma _j ] = [\Gamma _j ,\Gamma _i ]^{-1}} \hfill \\
[\Gamma _i ,\Gamma _j ][\Gamma _k ,\Gamma _l]=[\Gamma _k ,\Gamma
_l][\Gamma _i ,\Gamma _j ],\Gamma _i,\Gamma _j,\Gamma _k,\Gamma _l \in \Gamma(p) \\
\prod\limits_{\Gamma_x \in \Gamma(p)}{[\Gamma_x,\Gamma_y],\Gamma_y\in \Gamma(p)}\hfill \\
\end{gathered} \right. \right\rangle.$
\vskip1cm
 We can see that the generators of the different groups $C_p$ in the direct sum are
 disjoint. Consequently,
let $x\in G_2/G_3$, then $x=\bigoplus_{p \in\mathcal  {P}} {c_p}$,
where $c_p \in C_p$.

 For each $r\in \mathcal{P} $, consider the projection  $$\xi_r:G_2/G_3\rightarrow C_r$$  given by
 $$\xi_r(x)=\xi_r\Big(\bigoplus\limits_{p \in\mathcal  {P}} {c_p}
 \Big)=c_r.$$

If $\Gamma_i \in {\Gamma(r)}^c$, then $ \forall \Gamma_j,
\xi_r([\Gamma_i,\Gamma_j])=\xi_r([\Gamma_j,\Gamma_i])=e$. If
$\Gamma_i, \Gamma_j \in {\Gamma(r)}$, then
$\xi_r([\Gamma_i,\Gamma_j])=[\Gamma_i,\Gamma_j]$.

\end{rmrk}

\section{the stabilizer of an intersection point}
Let $G=\pi_1(\C^2- \Sigma)$.
 Define \[f:G/G_2 \times G/G_2 \longrightarrow
G_2/G_3 \] by
\[f(\overline{a},\overline{b})=[a,b]/G_3. \]

This function is well-defined: If $\overline{a_1}=\overline{a_2}$
and $\overline{b_1}=\overline{b_2}$, then $a_2=a_1x$ where $x\in
G_2$, $b_2=b_1y$ where $y\in G_2$. Then, by Proposition \ref{WHI}:
 $$[a_2,b_2]=[a_1x,b_1y]=[a_1x,b_1][a_1x,y]=[a_1x,b_1]=[a_1,b_1][x,b_1]=[a_1,b_1],$$
 so $f(\overline{a_1},\overline{b_1})=f(\overline{a_2},\overline{b_2} )$.
\vskip1cm

 The following lemma presents some properties of $f$:
\begin{lem} {}\quad \label{additive}
Let $a,b,c \in G/G_2$. Then:
\begin{enumerate}
 \item $f(a \cdot b,c)=f(a,c)\cdot f(b,c).$ \item
$f(a,b \cdot c)=f(a,b)\cdot f(a,c).$

\item $f(a^n,b^m)={f(a,b)}^{nm} \mbox{  for  } m,n \in \Z.$ \item
$f(b,a)=(f(a,b))^{-1}$.

\end{enumerate}
\end{lem}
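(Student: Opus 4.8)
The plan is to establish parts (1), (2) and (4) directly from the Witt-Hall identities of Proposition~\ref{WHI}, and then to obtain (3) by induction from these. The single observation that makes all three of the basic parts work is that a commutator one of whose entries already lies in $G_2$ automatically lies in $G_3=[G,G_2]$, and hence represents the identity in $G_2/G_3$.

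For part (1) I would expand $[ab,c]$ by the third identity, $[ab,c]=[a,[b,c]][b,c][a,c]$. Since $[b,c]\in G_2$, the factor $[a,[b,c]]$ lies in $[G,G_2]=G_3$ and drops out modulo $G_3$, leaving $[ab,c]\equiv [b,c][a,c]\pmod{G_3}$. Because $G_2/G_3$ is abelian these two factors commute, and passing to classes gives $f(a\cdot b,c)=f(a,c)\cdot f(b,c)$. Part (2) is completely parallel using the second identity $[a,bc]=[a,b][a,c][[c,a],b]$: now $[c,a]\in G_2$ forces $[[c,a],b]\in G_3$, and the surviving factors yield $f(a,b\cdot c)=f(a,b)\cdot f(a,c)$. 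Part (4) is immediate from the first identity $[a,b][b,a]=e$, which gives $[b,a]=[a,b]^{-1}$ and hence $f(b,a)=(f(a,b))^{-1}$ after reduction modulo $G_3$.

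For part (3) I would first record that $f(e,b)=[e,b]/G_3=e$, and then run a double induction. Holding $b$ fixed, iterating (1) gives $f(a^n,b)=f(a,b)^n$ for every $n\ge 0$; the negative case follows from $f(a^{-1},b)\cdot f(a,b)=f(a^{-1}a,b)=f(e,b)=e$, so $f(a^{-1},b)=f(a,b)^{-1}$. The same argument applied to the second slot via (2) then produces $f(a^n,b^m)=f(a^n,b)^m=f(a,b)^{nm}$ for all $m,n\in\Z$.

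The arithmetic itself is short, so the only genuine point of care — and really the heart of the lemma — is confirming that the correction terms $[a,[b,c]]$ and $[[c,a],b]$ supplied by the Witt-Hall identities both fall in $G_3$ and therefore vanish in $G_2/G_3$. Once that is secured, the biadditivity and antisymmetry of $f$ follow with no further difficulty.
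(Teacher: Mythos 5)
Your proof is correct and follows essentially the same route as the paper: the paper likewise reduces $[AB,C]/G_3=([A,C][B,C])/G_3$ via the Witt--Hall identities (with the correction terms $[a,[b,c]]$ and $[[c,a],b]$ dying in $G_3$, as derived just before Proposition~\ref{glob rep}), gets (2) symmetrically, (4) from $[a,b][b,a]=e$, and (3) by induction on (1) and (2). Your treatment is if anything slightly more careful, since you spell out the base case $f(e,b)=e$ and the negative-exponent step that the paper compresses into ``simple induction.''
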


\begin{proof}
\hskip 0.5cm
\begin{enumerate}
\item Let $A, B,C \in G$ such that $\overline{A}=a$,
$\overline{B}=b$, and $\overline{C}=c.$
 This means that $\overline{AB}=ab$, so by definition,
  $f(ab,c)=[AB,C]/G_3=([A,C][B,C])/G_3$ which by definition is equal
    to $f(a,c)f(b,c)$.

\item The proof is the same as $(1)$.

\item Simple induction on $(1)$ and $(2)$.

\item Let $A,B \in G$ such that $\overline{A}=a$,
$\overline{B}=b$. By definition
$$f(b,a)=[B,A]/G_3=\left({[A,B]/G_3}\right)^{-1}=(f(a,b))^{-1} $$
\end{enumerate}
 \end{proof}

  Now  for any $\overline{x} \in G/G_2$ we define $$S(\overline{x})=\{y \in
G/G_2|f(\overline{y},\overline{x})=e_{G_3}\}.$$ By Lemma
\ref{additive}, $S(\overline{x})$ is a subgroup of $G/G_2$ .
\medskip
 From now on, we talk about a specific intersection point $Q$.
The lines passing through this point are
$\{L_{i_1},\ldots,L_{i_m}\}$.  Define:
$M=\Gamma_{i_1}^{x_1}\Gamma_{i_2}^{x_2}\cdots\Gamma_{i_m}^{x_m}$.
Then, as noted in Lemma \ref{BFG}, the relations induced from the
point Q  can be translated to
$[\Gamma_{i_1}^{x_1},M]=\cdots=[\Gamma_{i_m}^{x_m},M]=e.$

Let $\overline{M}=\overline{\Gamma_{i_1}} \cdot
\overline{\Gamma_{i_2}} \cdots \overline{\Gamma_{i_m}}$.


Since for each $j$,
$\overline{\Gamma_{i_j}}=\overline{\Gamma_{i_j}^{x_j}}$ we get
that

\begin{equation} \label{f(gamma,M)}
f(\overline{\Gamma_{i_j}},\overline{M})=[\Gamma_{i_j}^{x_j},M]/G_3=e_{G_3}.
\end{equation}

\begin{thm}\label{SM}

Let $Q\in \mathcal{P}$ be an intersection point.
 Let $\Gamma(Q)=\{\Gamma_{i_1},\ldots,\Gamma_{i_m} \}$ and
$M={\Gamma_{i_1}^{x_1}} \cdots {\Gamma_{i_m}^{x_m}} $.

 Then
\[
S(\overline{M} ) = \bigg\langle  \overline{\Gamma(Q)}   \cup
\bigg(\bigcap\limits_{\Gamma \in \Gamma(Q)} S(\overline{\Gamma}
)\bigg) \bigg \rangle.
\]
We call $S(\overline{M})$ the {\em stabilizer} of the intersection
point Q.
 \end{thm}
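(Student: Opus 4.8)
The plan is to establish the asserted equality by proving the two inclusions separately. Throughout, write $A=\overline{\Gamma(Q)}=\{\overline{\Gamma_{i_1}},\dots,\overline{\Gamma_{i_m}}\}$ and $B=\bigcap_{\Gamma\in\Gamma(Q)}S(\overline{\Gamma})$, and recall that $G/G_2$ is free abelian on $\overline{\Gamma_1},\dots,\overline{\Gamma_n}$ and that $f$ is biadditive and antisymmetric by Lemma \ref{additive}. The inclusion $\langle A\cup B\rangle\subseteq S(\overline{M})$ is the easy half: since $S(\overline{M})$ is a subgroup it suffices to see that $A\subseteq S(\overline{M})$ and $B\subseteq S(\overline{M})$. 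The containment $A\subseteq S(\overline{M})$ is precisely equation (\ref{f(gamma,M)}). For $B$, if $\overline{y}\in B$ then $f(\overline{y},\overline{\Gamma_{i_l}})=e_{G_3}$ for every $l$; since $\overline{M}=\overline{\Gamma_{i_1}}\cdots\overline{\Gamma_{i_m}}$, biadditivity gives $f(\overline{y},\overline{M})=\prod_{l}f(\overline{y},\overline{\Gamma_{i_l}})=e_{G_3}$, so $\overline{y}\in S(\overline{M})$.

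For the reverse inclusion $S(\overline{M})\subseteq\langle A\cup B\rangle$, take $\overline{y}\in S(\overline{M})$ and split it as $\overline{y}=\overline{y_0}\cdot\overline{z}$, where $\overline{y_0}\in\langle A\rangle$ collects the factors of $\overline{y}$ supported on the generators of $\Gamma(Q)$ and $\overline{z}$ is supported only on generators $\overline{\Gamma_k}$ with $\Gamma_k\notin\Gamma(Q)$. Since $\langle A\rangle\subseteq S(\overline{M})$ by the first half, $\overline{z}=\overline{y}\,\overline{y_0}^{-1}$ again lies in $S(\overline{M})$. It now suffices to prove that $\overline{z}\in B$, for then $\overline{y}=\overline{y_0}\cdot\overline{z}\in\langle A\cup B\rangle$. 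Thus the whole problem reduces to showing, for each fixed $l$, that $f(\overline{z},\overline{\Gamma_{i_l}})=e_{G_3}$, and by the decomposition $G_2/G_3=\bigoplus_{r\in\mathcal{P}}C_r$ of Remark \ref{ds} this is equivalent to $\xi_r\big(f(\overline{z},\overline{\Gamma_{i_l}})\big)=e$ for every intersection point $r$.

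The verification of these projections is where the geometry enters, and it is the step I expect to be the crux. I would argue by cases on $r$. If $\Gamma_{i_l}\notin\Gamma(r)$, then by the description of $\xi_r$ in Remark \ref{ds} every generator $[\Gamma_k,\Gamma_{i_l}]$ appearing in $f(\overline{z},\overline{\Gamma_{i_l}})$ is killed, so the projection is trivial. If $r=Q$, then $\xi_Q\big(f(\overline{z},\overline{\Gamma_{i_l}})\big)$ only retains the contributions of generators $\overline{\Gamma_k}$ with $\Gamma_k\in\Gamma(Q)$, and $\overline{z}$ has no such factors by construction, so again the projection is trivial. The remaining, essential case is $r\neq Q$ with $\Gamma_{i_l}\in\Gamma(r)$: here $L_{i_l}$ passes through both $Q$ and $r$, and since two distinct points lie on at most one common line of the arrangement, $L_{i_l}$ is the only one of the lines $L_{i_1},\dots,L_{i_m}$ through $Q$ that also passes through $r$. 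Applying $\xi_r$ to the product $f(\overline{z},\overline{M})=\prod_{j}f(\overline{z},\overline{\Gamma_{i_j}})$, every factor with $i_j\neq i_l$ dies because $\Gamma_{i_j}\notin\Gamma(r)$, leaving $\xi_r\big(f(\overline{z},\overline{M})\big)=\xi_r\big(f(\overline{z},\overline{\Gamma_{i_l}})\big)$. Since $\overline{z}\in S(\overline{M})$, the left-hand side is $e$, hence so is the right. Collecting the three cases yields $f(\overline{z},\overline{\Gamma_{i_l}})=e_{G_3}$ for all $l$, i.e. $\overline{z}\in B$, which completes the inclusion and the proof. The only genuinely delicate point is this collapse of the product under $\xi_r$, which rests entirely on the incidence fact that a line through $Q$ can meet a second point $r$ in only one way.
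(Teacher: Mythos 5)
Your proposal is correct and follows essentially the same route as the paper's own proof: the easy inclusion via equation (\ref{f(gamma,M)}) and biadditivity of $f$, then the splitting of an element of $S(\overline{M})$ into a $\langle\overline{\Gamma(Q)}\rangle$-part and a complementary part, reduced via the decomposition $G_2/G_3=\bigoplus_{p\in\mathcal{P}}C_p$ to the same three cases ($r=Q$; the line missing $r$; the line through both $Q$ and $r$), with the crux being the identical incidence fact that two distinct intersection points share at most one line. The only differences are notational (your $\overline{z}$ plays the role of the paper's $\overline{y}$, and you use the antisymmetry of $f$ to swap argument order), so nothing further is needed.
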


\begin{proof}

We start by proving that $ S(\overline{M} ) \supseteq \bigg\langle
\overline{\Gamma(Q)}   \cup \bigg(\bigcap\limits_{\Gamma \in
\Gamma(Q)} S(\overline{\Gamma} )\bigg) \bigg \rangle. $

 $ \overline{\Gamma(Q)} \subseteq S(\overline{M})$:
We  have already shown that if $\Gamma \in \Gamma(Q)$, then
$f(\overline{\Gamma}, \overline{M} )=\overline{e}$ and hence $
f(\overline{M}, \overline{\Gamma} )=f((\overline{\Gamma},
\overline{M} ))^{-1}=\overline{e}$.
\\
  $\bigcap \limits_{\Gamma \in \Gamma(Q)} S(\overline{\Gamma} )
\subseteq S(\overline{M}) $: If $x\in \bigcap _{\Gamma \in
\Gamma(Q)} S(\overline{\Gamma} )$, then for any $ {\Gamma \in
\Gamma(Q)}, $ \ $f(\overline{\Gamma},x)=e$. This means that also
the product $\prod_{\Gamma \in
\Gamma(Q)}f(\overline{\Gamma},x)=\overline{e}$,
 so by Lemma \ref{additive}
$$
 \overline{e}=\prod_{\Gamma \in
\Gamma(Q)}f(\overline{\Gamma},x)=f \left( \left(\prod\limits_{\Gamma
\in\Gamma(Q)}\overline{\Gamma}\right ),x \right ) =f\left(\overline{M},x\right).$$
 Therefore $x \in S(\overline{M})$.
\vskip1cm

Since $S(\overline{M})$ is a subgroup and we have shown that it
contains the union of $\overline{\Gamma(Q)} $ and
$\bigcap\limits_{\Gamma \in \Gamma(Q)} S(\overline{\Gamma} ), $ it
clearly contains the subgroup generated by the union.
\medskip

To complete the proof, we prove  the opposite inclusion:
\[
S(\overline{M} ) \subseteq \bigg\langle \overline{\Gamma(Q)} \cup
 \bigg(\bigcap\limits_ {\Gamma \in\Gamma(Q)} S(\overline{\Gamma} ) \bigg)
\ \bigg \rangle.
\]
Let $\overline{x} \in S(\overline{M} ) \subseteq G/G_2= \langle
\overline{\Gamma(Q)},\overline{{\Gamma(Q)}^c}|\overline{R}
\rangle$. Then $\overline{x}$ can be written as
$\overline{x}=\overline{z}\cdot \overline{y}$ where $
\overline{z}\in \langle\overline{\Gamma(Q)}\rangle ,
\overline{y}\in \langle {\Gamma(Q)}^c\rangle $. We will prove $
\overline{y}\in \bigcap\limits_ {\Gamma \in\Gamma(Q)}
S(\overline{\Gamma})$.

 Since  $ \overline{z^{-1}} \in
\langle \overline{\Gamma(Q)}\rangle  \subseteq S(\overline{M})$,
so $\overline{y}= \overline{z^{-1}} \cdot \overline{z} \cdot
\overline{y}=\overline{z^{-1}}\overline{x} $. Hence we get
$\overline{y} \in S(\overline{M}) $.

 Let $\l_x $ be a line passing through $Q$  and $\Gamma_x$  be the generator
 associated with it. Recall that $ \overline{y}\in S(\overline{\Gamma})$ if $ f(\overline{\Gamma},
\overline{ y})=\overline{e}$.
  We need to prove that $f(\overline{\Gamma_x},\overline{y})=\overline{e}$ in $G_2/G_3$. From Lemma  \ref{ds}
  we know that $G_2/G_3$ is a direct sum of groups
  $$G_2 /G_3=\bigoplus\limits_{p\in \mathcal{P}}C_p,$$
 so it remains to prove that $ f([\overline{\Gamma_x},\overline{y}])=\overline{e}$ in $C_p$, for all  $p\in \mathcal{P}$.
 Let $p\in \mathcal{P}$. We have to show that  the projection of the coset  $[\overline{\Gamma_x},\overline{y}]/G_3$
  on $C_p$ is trivial.

We separate our proof into three cases.

  {\textbf{Case $1$:}}
 $p=Q .$

 Since $\overline{y}\in  \langle\overline{\Gamma(Q)^c}\rangle= \langle\overline{\Gamma(p)^c}\rangle$
 then the projection is trivial by the definition of $C_p$.

 {\textbf{Case $2$:}}  $\l_x$ does not pass through $p$.

 In this case, $\overline{\Gamma_x} \in \langle \overline{\Gamma(p)^c}\rangle $ and therefore the projection is
 trivial, by the definition of $C_p$.

  {\textbf{Case $3$:}}
 $p \neq Q$  and $\l_x$ does pass through $p$.

 By definition of S, $f(\prod_ { \Gamma \in \Gamma(Q)} \overline{\Gamma },\overline{y})=\overline{e}.$
 This means by the properties of $f$ (Lemma \ref{additive} ) that
 $\prod_{\Gamma \in \Gamma(Q)}f(\overline{\Gamma},\overline{y})=\overline{e}$.
 Since $\l_x$ passes through $p$, all the other lines passing through $Q$ do not cross $p.$
  So if we project any $f(\overline{\Gamma_z},\overline{y})$ where $\Gamma_z$ is any other generator
   from $\Gamma(Q),$
 we get the identity. So in $C_p$,\ $$\overline{e}=\prod_{\Gamma \in \Gamma(Q)}f(\overline{\Gamma},\overline{y})=
 f(\overline{\Gamma_x},\overline{y}),$$ and thus $f(\overline{\Gamma_x},\overline{y})=\overline{e}.$

 \end{proof}

\section {fundamental groups which are semidirect product }
\begin{thm}\label{ds pro}
Let $G=\mathop {\left(\bigoplus _{i = 1}^n A_i \right) }\oplus \mathbb{Z}^l$
where $A_i$ are free groups and  let
$\psi:G\longrightarrow G$  be a projection  where
$\im(\psi)=\langle y_1,\ldots,y_k\rangle \cong{\mathbb{F}}_k$. Then, there
exists $r,1\leq r \leq n$,  such that $\forall g\in G, {\rm pr}_r(g)=e
$ implies that $ \psi (g)=e$ and $\psi=\psi\circ {\rm pr}_r$, where
${\rm pr}_r:G\rightarrow G$
 is the projection onto the subgroup of G naturally isomorphic to $A_r$.

\end{thm}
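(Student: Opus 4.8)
The plan is to reduce both conclusions to a single inclusion of kernels and then exploit the rigidity of commuting subgroups of a free group. First I would note that the two assertions ``$\operatorname{pr}_r(g)=e \Rightarrow \psi(g)=e$'' and ``$\psi=\psi\circ\operatorname{pr}_r$'' are both equivalent to $\ker(\operatorname{pr}_r)\subseteq\ker(\psi)$. Indeed, writing $g=a_1\cdots a_n z$ in the direct factors (which commute pairwise) we have $g=a_r\cdot m$ with $m\in\ker(\operatorname{pr}_r)$, so $\psi(g)=\psi(a_r)\psi(m)$; the vanishing $\psi(m)=e$ for every such $m$ is precisely the content of the first statement and yields $\psi(g)=\psi(\operatorname{pr}_r(g))$, the second. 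Since $\ker(\operatorname{pr}_r)$ is generated by the $A_j$ with $j\neq r$ together with $\mathbb{Z}^l$, it suffices to find an index $r$ with $\psi(A_j)=\{e\}$ for all $j\neq r$ and $\psi(\mathbb{Z}^l)=\{e\}$.

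The key structural input is that $G$ is a direct product, so for $i\neq j$ every element of $A_i$ commutes with every element of $A_j$, and likewise with $\mathbb{Z}^l$. Applying $\psi$, the subgroups $\psi(A_1),\dots,\psi(A_n),\psi(\mathbb{Z}^l)$ of $\im(\psi)\cong\mathbb{F}_k$ commute elementwise in pairs, and since the $A_i$ and $\mathbb{Z}^l$ generate $G$ these subgroups generate $\mathbb{F}_k$. I would then invoke two standard facts about a free group $F$: every abelian subgroup of $F$ is cyclic, and the centralizer of any nontrivial element of $F$ is infinite cyclic (so the centralizer of a nonabelian subgroup is trivial).

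Now comes the main dichotomy. Suppose first that all of $\psi(A_1),\dots,\psi(A_n),\psi(\mathbb{Z}^l)$ were abelian, hence cyclic. Taking a nontrivial element $u$ in one of them, its centralizer $\langle w\rangle$ in $\mathbb{F}_k$ is cyclic, and every element of every other factor image commutes with $u$, so all the factor images lie in $\langle w\rangle$; thus $\im(\psi)=\mathbb{F}_k\subseteq\langle w\rangle$ would be cyclic, which is impossible for $k\geq2$. Therefore some factor image is nonabelian. Because $\psi(\mathbb{Z}^l)$ is a homomorphic image of an abelian group it is abelian, so the nonabelian one is $\psi(A_r)$ for some $r$ with $1\leq r\leq n$. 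Its centralizer in $\mathbb{F}_k$ is trivial, and every other $\psi(A_j)$ (with $j\neq r$) and $\psi(\mathbb{Z}^l)$ commutes elementwise with it, hence lies in that centralizer and vanishes. This gives $\ker(\operatorname{pr}_r)\subseteq\ker(\psi)$, finishing the argument.

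I expect the genuine obstacle to be the degenerate case in which $\im(\psi)$ is cyclic, that is $k\leq1$: there the dichotomy collapses and the conclusion can actually fail, for instance when $\psi$ is the projection onto one of the $\mathbb{Z}$-summands, which is contained in no $A_r$. So the argument above really needs $\im(\psi)$ nonabelian, and I would either read the hypothesis $\im(\psi)\cong\mathbb{F}_k$ as including $k\geq2$ or verify from the intended application that the cyclic case does not arise. It is worth emphasizing that separating the abelian summand $\mathbb{Z}^l$ from the free factors $A_i$ is exactly what forces the surviving index $r$ into the range $1\leq r\leq n$, since no nonabelian image can originate from $\mathbb{Z}^l$.
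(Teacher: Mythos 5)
Your proof is correct, and while it turns on the same rigidity fact as the paper's proof --- in a free group an element commuting with two non-commuting elements must be trivial, because centralizers of nontrivial elements are cyclic --- you locate the index $r$ by a genuinely different mechanism. The paper uses the projection hypothesis directly: since $\psi(y_1)=y_1$ and $\psi(y_2)=y_2$, the commutator $[y_1,y_2]$ is a nontrivial element fixed by $\psi$; decomposing it across the factors (its ${\mathbb Z}^l$-component vanishes by abelianness) forces some $r$ with $\psi({\rm pr}_r([y_1,y_2]))\neq e$, which hands them a non-commuting pair $\psi(a),\psi(b)$ with $a,b\in A_r$, after which every $x$ in another factor has $\psi(x)$ centralizing both and is therefore killed. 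You never invoke $\psi\circ\psi=\psi$: you observe that the pairwise elementwise-commuting subgroups $\psi(A_1),\dots,\psi(A_n),\psi({\mathbb Z}^l)$ generate $\im(\psi)\cong{\mathbb F}_k$ and run a dichotomy --- if all were abelian the whole image would lie in a single cyclic centralizer, impossible for $k\geq 2$, so some $\psi(A_r)$ is nonabelian and its trivial centralizer swallows every other factor image. This buys real generality (your argument applies to any homomorphism with nonabelian free image, not only projections), and it makes explicit two points the paper glosses over: that $\psi$ also annihilates the ${\mathbb Z}^l$-summand (used silently in the paper's last line as $\psi(w)=e$, though it follows from the same centralizer argument since ${\mathbb Z}^l$ is central), and that the statement requires $k\geq 2$ --- your counterexample for $k=1$ ($\psi$ the projection onto a ${\mathbb Z}$-summand, which factors through no $A_r$) is correct, and the same implicit assumption sits in the paper's proof, which needs $[y_1,y_2]\neq e$; the gap is harmless in the intended application, where $k=m-1\geq 2$ because a multiple point has $m\geq 3$ lines. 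Your upfront reduction of both conclusions to the single inclusion $\ker({\rm pr}_r)\subseteq\ker(\psi)$ is also a cleaner packaging of the paper's concluding computation $\psi(g)=\psi(v_r)=\psi({\rm pr}_r(g))$.
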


\begin{proof} Let us denote the  ${\Z}^l$  component as $A_0$.
Since $\psi$ is a projection, $\psi(y_1)=y_1$ and $\psi(y_2)=y_2$.
Then $\psi([y_1,y_2])=[y_1,y_2]\neq e.$ Note: $$[y_1 ,y_2 ] =
\mathop \oplus \limits_{i = 1}^n {\rm pr}_i ([y_1 ,y_2 ]) + {\rm pr}_0 ([y_1
,y_2 ]).$$

Since $A_0$ is abelian, $ {{\rm pr}_0 ([y_1 ,y_2 ])}=e$, and
therefore
$$[y_1 ,y_2 ] = \mathop \oplus \limits_{i = 1}^n {\rm pr}_i ([y_1 ,y_2
]).$$

 Since $ \psi $ is a homomorphism: $$\psi([y_1,y_2])=\psi
({\rm pr}_1([y_1,y_2])\oplus \dots \oplus {\rm pr}_n
([y_1,y_2]))=\psi ({\rm pr}_1([y_1,y_2])) \oplus \cdots \oplus
\psi({\rm pr}_n ([y_1,y_2]))\neq e.$$

 This means that $ \exists r,1 \leq r \leq n $,
such that $\psi({\rm pr}_r [y_1,y_2])\neq e $, so if we denote $a:=
{\rm pr}_r(y_1)$ and $b:={\rm pr}_r(y_2)$ we get $[\psi(a),\psi(b)]\neq e$.

 Since $a,b \in A_r$, if $x\in A_j$ where $j \neq r$, then
$[x,a]=[x,b]=e$.
 We get that
$$[\psi(x),\psi(a)]=[\psi(x),\psi(b)]=e.$$

 So $\psi(x)$ commutes with two noncommutative elements in a free group and hence:
  $ \psi(x)=e ,\forall x\in A_j,$ $j\neq r .$
This means that if $g \in G$ and ${\rm pr}_r(g)=e$, then $\psi(g)=e$.

For each $g \in G$, $g=v_1 \oplus\cdots \oplus v_n \oplus w, v_j\in
A_j, w\in {\Z}^l, $  we have that \break  $\psi(g)=\psi (v_1 \oplus \cdots \oplus
v_n\oplus w)=\psi(v_1)\oplus \cdots \oplus \psi(v_n)\oplus
\psi(w)=\psi(v_r)=\psi({\rm pr}_r(g))$. Therefore $\psi=\psi \circ {\rm pr}_r$.

\end{proof}

For proving the next result, we use the following theorem from
\cite{LS}:

\begin{thm}\label{basis}
Let $\phi$ be an epimorphism from a finitely generated free group
$F$ to a  free group $G$. Then F has a basis $Z=Z_1\bigcup Z_2$
such that  $\phi$ maps $ \langle Z_1 \rangle$ isomorphically onto
$G$ and maps $\langle Z_2 \rangle$ to $e$.
\end{thm}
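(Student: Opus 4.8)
The plan is to prove the statement by the Nielsen reduction method, exploiting the fact that $\phi$ intertwines Nielsen transformations on $F$ with those on $G$. First I would fix a basis $x_1,\dots,x_n$ of $F$ (here $F$ is free of finite rank $n$). Since $\phi$ is onto, the ordered tuple $(\phi(x_1),\dots,\phi(x_n))$ generates $G$; being a quotient of a finitely generated group, $G$ is itself finitely generated, hence free of some rank $m\le n$.

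The key observation is the naturality of elementary Nielsen transformations. Each elementary transformation of an ordered basis — inverting an entry $x_i\mapsto x_i^{-1}$, multiplying $x_i\mapsto x_i x_j^{\pm 1}$ with $i\ne j$, or permuting entries — is an automorphism of $F$, so it carries $(x_1,\dots,x_n)$ to another basis of $F$. Because $\phi$ is a homomorphism, applying $\phi$ entrywise commutes with these operations: the $\phi$-image of the transformed tuple in $F$ is exactly the correspondingly transformed tuple in $G$. Hence any finite sequence of Nielsen transformations that I perform on $(\phi(x_1),\dots,\phi(x_n))$ inside $G$ lifts verbatim to $(x_1,\dots,x_n)$ inside $F$, and the lifted result is again a basis of $F$.

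Next I would invoke the standard fact from Nielsen reduction theory (the heart of the finite-rank Nielsen--Schreier theorem, as in \cite{LS}) that a finite generating sequence of a free group can be carried by Nielsen transformations to a sequence in which $m$ entries form a basis of $G$ and the remaining $n-m$ entries are trivial. Applying this to $(\phi(x_1),\dots,\phi(x_n))$ and lifting the same transformations to $F$, I obtain a new basis $y_1,\dots,y_n$ of $F$ such that $\phi(y_1),\dots,\phi(y_m)$ is a basis of $G$, while $\phi(y_{m+1})=\dots=\phi(y_n)=e$. Setting $Z_1=\{y_1,\dots,y_m\}$ and $Z_2=\{y_{m+1},\dots,y_n\}$, the set $Z=Z_1\cup Z_2$ is the desired basis: $\langle Z_2\rangle\subseteq\ker\phi$ by construction, and since $\phi(y_1),\dots,\phi(y_m)$ freely generate $G$, every nontrivial reduced word in $y_1,\dots,y_m$ has nontrivial image, so $\phi$ restricts to an isomorphism $\langle Z_1\rangle\xrightarrow{\sim}G$.

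I expect the main obstacle to be the Nielsen reduction input used in the third step — the termination of the reduction together with the structural fact that a Nielsen-reduced generating sequence of a free group consists of a basis plus trivial entries; this is precisely the combinatorial machinery of \cite{LS} from which the theorem is drawn, so in the write-up I would cite it rather than reprove it. An alternative, more structural route would use that $G$, being free, is projective, so the sequence $1\to\ker\phi\to F\to G\to 1$ splits; one then shows the image of the section is a free factor of $F$ and adjusts a basis of the complementary factor by elements of $\langle Z_1\rangle$ to push it into $\ker\phi$. I would prefer the Nielsen approach, since it produces the basis directly and sidesteps having to establish the retract-is-a-free-factor property on its own.
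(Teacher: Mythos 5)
This theorem is stated in the paper without proof: it is imported verbatim from \cite{LS}, so there is no internal argument to compare against, and your Nielsen-reduction proof is in fact the standard proof of exactly this statement in Lyndon--Schupp. Your argument is correct as written --- the naturality of elementary Nielsen moves under $\phi$ (so that reductions performed on the tuple $(\phi(x_1),\dots,\phi(x_n))$ in $G$ lift to automorphisms of $F$ carrying the basis to a new basis), combined with the cited fact that a finite generating tuple of a free group Nielsen-reduces to a basis of $G$ together with trivial entries, yields precisely the desired basis $Z=Z_1\cup Z_2$; citing \cite{LS} for the reduction machinery is entirely consonant with the paper, which cites the same source for the whole theorem.
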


\begin{corl}\label{genAr}
With the same assumptions of Theorem \ref{ds pro} where
$\im(\psi)=\langle y_1,\ldots,y_k\rangle \cong \mathbb{F}_k$, for
the same $ r$ in Theorem  \ref{ds pro} we can find elements \break
$\{z_1,\ldots, z_m|m\geq 0\} \subseteq A_r$ such that $$\{{\rm pr}_r(y_1),
\ldots,{\rm pr}_r(y_k) ,z_1,\ldots, z_m|m\geq 0\}$$ are generators of
$A_r$ and  $\psi(z_i)=e$.

\begin{proof}

From Theorem \ref{ds pro}  and the fact that $\psi$  is
projection ($\psi \circ \psi=\psi$), we get that
${\rm pr}_r\circ\psi\circ {\rm pr}_r\circ\psi={\rm pr}_r\circ\psi$. Therefore,
${\rm pr}_r\circ \psi$ is a projection.

Now since $({\rm pr}_r\circ\psi) (A_r)\subseteq A_r,$
\[ ({\rm pr}_r\circ\psi)|_{A_r}\circ({\rm pr}_r\circ\psi)|_{A_r}=(({\rm pr}_r\circ\psi)
 \circ ({\rm pr}_r\circ\psi))|_{A_r}={({\rm pr}_r\circ\psi)|_{A_r}}.\]
Given:
 \begin{equation} \im(\psi)=\langle y_1,\ldots,y_k \rangle\quad \text{is a free
 group.}\tag{$\ast$}\end{equation}
Now we claim that: $\{ {\rm pr}_r(y_1),\ldots,{\rm pr}_r(y_k)\}$
are the generators of free group ${\F}_k$. Otherwise, there exists
a nontrivial word $w=W(y_1,\ldots,y_k)$ so that ${\rm
pr_r}(w)=W({\rm pr}_r(y_1),\ldots,{\rm pr}_r(y_k))=e$. Therefore:
$$(\psi\circ {\rm pr}_r)(w)=\psi({\rm pr}_r(w))=\psi(e)=e. $$ By
Theorem  \ref{ds pro}: $\psi=\psi\circ {\rm pr}_r$, therefore
$\psi(w)=(\psi\circ {\rm pr}_r)(w)=e$ which means that $w=e$, a
contradiction to ($\ast$).

In conclusion, we get that  $({\rm pr}_r\circ\psi)|_{A_r}:A_r\rightarrow
A_r$ is a projection, such that \break $  \im({\rm pr}_r\circ\psi)=\langle
{\rm pr}_r(y_1),\ldots,{\rm pr}_r(y_k) \rangle \cong {\F}_k$
(${\rm pr}_r(y_1),\ldots,{\rm pr}_r(y_k)$ are the generators of $\F_k$).

By Theorem \ref{basis}, $A_i$ has a basis $B=B_1\bigcup B_2$ such
that ${\rm im}({\rm pr}_r\circ\psi)= \langle B_1\rangle $ and
$({\rm pr}_r\circ\psi)(B_2)=e$. Since ${\rm pr}_r\circ\psi$ is a
projection, we can assume that \break $B_1= \langle {\rm
pr}_r(y_1),\ldots,{\rm pr}_r(y_k)\rangle$. Let $ B_2=
\{z_1,\ldots,z_m |{\rm pr}_r\circ \psi(z_i)=e\}$, which means that
$ (\psi \circ {\rm pr}_r\circ \psi)(z_i)=e$ . As we proved
earlier, $ \psi \circ {\rm pr}_r =\psi$, so $( \psi \circ
\psi)(z_i)=e$. But $ \psi \circ \psi=\psi$, therefore $
\psi(z_i)=e$.

 This implies that $A_r$ has generators \[\{ {\rm pr}_r(y_1),\ldots, {\rm pr}_r(y_k), z_1,\ldots, z_m\} \]
which satisfy the needed properties.

\end{proof}
\end{corl}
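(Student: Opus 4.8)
The plan is to move the whole question inside the single free factor $A_r$ and then apply Theorem \ref{basis}; the engine driving everything is the composite ${\rm pr}_r\circ\psi$. First I would observe that this composite is again a projection. Since $\psi$ is a projection we have $\psi\circ\psi=\psi$, and Theorem \ref{ds pro} supplies the crucial identity $\psi=\psi\circ {\rm pr}_r$. Combining them,
\[
({\rm pr}_r\circ\psi)\circ({\rm pr}_r\circ\psi)={\rm pr}_r\circ(\psi\circ {\rm pr}_r)\circ\psi={\rm pr}_r\circ\psi\circ\psi={\rm pr}_r\circ\psi .
\]
Because ${\rm pr}_r$ maps $G$ into the copy of $A_r$, the restriction $({\rm pr}_r\circ\psi)|_{A_r}$ is an idempotent endomorphism of the free group $A_r$, i.e.\ a projection of $A_r$ onto its image.

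Second, I would check that $\{{\rm pr}_r(y_1),\ldots,{\rm pr}_r(y_k)\}$ is itself a free generating set, so that the image of $({\rm pr}_r\circ\psi)|_{A_r}$ is honestly free of rank $k$. I expect this to be the one genuinely delicate point. The argument I would run is by contradiction: if some nontrivial reduced word $w=W(y_1,\ldots,y_k)$ satisfied $W({\rm pr}_r(y_1),\ldots,{\rm pr}_r(y_k))=e$, then applying $\psi$ and invoking $\psi=\psi\circ {\rm pr}_r$ would give $\psi(w)=e$; but the $y_i$ lie in $\im(\psi)$ and $\psi$ fixes $\im(\psi)$ pointwise (being a projection), so $\psi(w)=w$ and therefore $w=e$, contradicting the freeness of $\im(\psi)\cong\F_k$. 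Hence $\im\big(({\rm pr}_r\circ\psi)|_{A_r}\big)=\langle {\rm pr}_r(y_1),\ldots,{\rm pr}_r(y_k)\rangle\cong\F_k$.

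Third, I would feed this into Theorem \ref{basis}. Now $({\rm pr}_r\circ\psi)|_{A_r}$ is an epimorphism of the (finitely generated) free group $A_r$ onto the free group $\im\big(({\rm pr}_r\circ\psi)|_{A_r}\big)$, so the theorem produces a basis $B=B_1\cup B_2$ of $A_r$ with $\langle B_1\rangle$ mapping isomorphically onto the image and $B_2\mapsto e$. Since the map is a projection I may take $B_1=\{{\rm pr}_r(y_1),\ldots,{\rm pr}_r(y_k)\}$ and set $\{z_1,\ldots,z_m\}:=B_2$; then $\{{\rm pr}_r(y_1),\ldots,{\rm pr}_r(y_k),z_1,\ldots,z_m\}$ generates $A_r$, as required.

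Finally I would verify $\psi(z_i)=e$, which is pure bookkeeping with the idempotent identities. Starting from $({\rm pr}_r\circ\psi)(z_i)=e$ and applying $\psi$ gives $(\psi\circ {\rm pr}_r\circ\psi)(z_i)=e$; substituting $\psi\circ {\rm pr}_r=\psi$ collapses the left side to $(\psi\circ\psi)(z_i)=\psi(z_i)$, so $\psi(z_i)=e$. The main obstacle is the second step, namely confirming that projecting the free generators $y_i$ by ${\rm pr}_r$ keeps them free; once that is secured, the rest follows mechanically from Theorem \ref{ds pro} and Theorem \ref{basis}.
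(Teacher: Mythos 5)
Your proposal is correct and follows essentially the same route as the paper: show ${\rm pr}_r\circ\psi$ is idempotent, restrict it to $A_r$, prove $\{{\rm pr}_r(y_1),\ldots,{\rm pr}_r(y_k)\}$ is free by the same contradiction argument, and then invoke Theorem \ref{basis} with $B_1$ taken to be these elements and $B_2$ the $z_i$'s killed by $\psi$. In fact you make explicit one step the paper leaves tacit — that $\psi$ fixes $\im(\psi)$ pointwise, so $\psi(w)=w$ forces $w=e$ — which is a small improvement in rigor.
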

Without loss of generality, we temporarily change to a local
numeration.

Let $G=\pi_1({\C}^2 -\Sigma)=\langle \Gamma_1,\ldots,
\Gamma_n,\Gamma_{n+1},\ldots, \Gamma_l |R \rangle$, where
$\Gamma(A)=\{\Gamma_1,\ldots ,\Gamma_n \}$ are the generators of
the lines which participate in a specific multiple intersection
point called A.

\begin{defn}\cite{Jo}
Let $G$ be a group with a given presentation: $G=\langle
X|R\rangle $. A \textit{Tietze transformation}  $T_i \quad( 1\leq i
\leq4 )$ is a transformation of $\langle X|R\rangle $ to a
presentation $G=\langle X'|R'\rangle $ of one of the following
types:

\begin{enumerate}
\item[(T$_1$)]   If $r$ is a word in $\langle X \rangle$ and $r=e$ is a relation
that holds in $G$, then let $X'=X, R'=R\cup\{r\}. $

\item[(T$_2$)]   If $r\in R$ is such that the relation $r=e$ holds
in the group $ \langle  X|R-{r}\rangle $, then let $X'=X,
R'=R\setminus \{r\}$.

\item[(T$_3$)]   If $w$ is a word in $\langle X \rangle$ and $z \notin X$, put
$X'=X\cup \{z\},R'=R\cup \{wz^{-1}\}$.

 \item[(T$_4$)]    If $z\in X$  and $w$  is a word in the other elements of X
 such that
$wz^{-1}\in R$, then substitute $w$ for $z$ in every other element
of $R$ to get $\widetilde{R}$ and take $X'=X - \{z\}, R'=
\widetilde{R}$.
\end{enumerate}
\end{defn}


Before introducing the notion of a {\it braid monodromy}
\cite{MoTe1}, we have to make some constructions. From now, we
will work in $\C^2$. Let $E$ (resp. $D$) be a closed disk on
$x$-axis (resp. $y$-axis), and let $C$ be a part of an algebraic
curve in $\C^2$ located in $E \times D$. Let $\pi _1 : E \times D
\to E$ and $\pi _2 : E \times D \to D$ be the canonical
projections, and let $\pi = \pi _1 |_C: C \to E$. Assume $\pi$ is
a proper map, and $\deg \pi = n$. Let $ N = \{ x \in E \ |\ \# \pi
^ {-1}(x) < n \} $, and assume $ N \cap \partial E= \emptyset $.
Now choose $ M \in \partial E$ and let $K = K(M) = \pi ^ {-1}
(M)$. By the assumption that $\deg \pi = n \ \ (\Rightarrow \#
K=n)$,  we can write: $K = \{ a_1,a_2, \dots ,a_n \} $. Under
these constructions, from each loop in $E-N$, we can define a
braid in $B_n [M \times D, K]$ in the following way:
\begin{itemize}
\item[(1)] Since $ \deg \pi = n$, we can lift any loop in $E-N$ with
a base point $M$ to a system of $n$ paths in $ (E-N) \times D $ which
start and finish at $ \{ a_1,a_2, \cdots ,a_n \} $.
\item[(2)] Project this system into $D$ (by $\pi _2$), to get $n$ paths in $D$
which start and end at the image of $K$ in $D$ (under $\pi_2$). These paths
actually form a motion.
\item[(3)] Induce a braid from this motion, see \cite{Gthesis}.
\end{itemize}

\medskip

To conclude, we can match a braid to each loop. Therefore, we get
a map $ \alpha : \pi _1(E-N,M) \to B_n [M \times D, K]$, which is
also a group homomorphism. This map is called the {\bf braid
monodromy of $C$ with respect to $E \times D,\pi _1,M$} (see
\cite{MoTe1}, \cite{Gthesis}).

\vskip0.5cm

 The following remark demonstrates the necessity of
the condition that there are no parallel lines in the affine plane
:

\begin{rmrk} \label{center}
 Let $ \Sigma$ be a line arrangement with no parallel line in the affine plane.
 Let $\Gamma_1,\ldots,\Gamma_l $ be the generators of
$\pi_1({\C}^2-\Sigma)$. Then $$\Gamma_1\cdots\Gamma_l \in
Z(\pi_1({\C}^2-\Sigma)).$$
\begin{proof}
Let $\alpha:\pi_1({\C}-\Sigma)\rightarrow B_n$ be the braid
monodromy of $ \Sigma$. By Remark 4.7 in \cite{CoSu}, the closed
braid determined by the product $\alpha(\Gamma_1)\cdots
\alpha(\Gamma_l)$ is actually a link of the curve at infinity.
In a generic curve, we have :
$\alpha(\Gamma_1)\cdots \alpha(\Gamma_l)=\Delta^2$, where $\Delta$
is the braid which rotates by $180^\circ$  counterclockwise all
the strands together. One of the presentations of the fundamental
group is: $\langle \Gamma_1,\ldots, \Gamma_l | \alpha
(s_i)(\Gamma_j)=\Gamma_j  \forall i,\forall j \rangle$, where
$s_i$ is the loop created in the $x$-axis as a result of the
projection of the line arrangement on the plane \cite{VK}.
Therefore, $\alpha(s_i)$ is a braid acting on the generators of
the fundamental group. As a result $\alpha(s_1 \cdots
s_n)(\Gamma_j)=\Gamma_j ,\forall j$, which means $
\Delta^2(\Gamma_j)=\Gamma_j ,\forall j$. It is known that $
\Delta^2(x)=x^{\Gamma_1\cdots\Gamma_l }$ and thus
${\Gamma_j}^{\Gamma_1\cdots\Gamma_l}=\Gamma_j$. Hence,
$\Gamma_1\cdots\Gamma_l \in Z(\pi_1({\C}^2-\Sigma))$.
\end{proof}
\end{rmrk}
\medskip

Let $G=\pi_1({\C}^2 -\Sigma)=\langle \Gamma_1,\ldots
\Gamma_n,\Gamma_{n+1},\ldots, \Gamma_l |R \rangle$, where
$\Gamma(A)=\{\Gamma_1,\ldots ,\Gamma_n \}$ are the generators of
the lines which pass through in a specific multiple intersection
point called A.

Let us recall that the lines passing through $Q$ are
$\{L_{i_1},\ldots,L_{i_m}\}$. Recall also $M=\Gamma_{i_1}^{x_1}\Gamma_{i_2}^{x_2}\cdots\Gamma_{i_m}^{x_m}$.

\begin{thm}\label{semidirect}With the same assumptions of Theorem \ref{ds pro} and Remark \ref{center},
  let $H:=\langle \Gamma_{i_1},\ldots,\Gamma_{i_{m-1}}\rangle $.
  Let $N$ be the normal closure of
 $\{{{\Gamma(Q)}^c},M\}$. Let $f_1:H \rightarrow G$ be the natural embedding and
 $f_2:G \rightarrow G/N$ the natural homomorphism.
 Then:
 \begin{enumerate}
 \item $G/N \cong {\mathbb{F}}_{m-1}$ which is generated by $f_2(\Gamma_{i_j}),$ $ 1\leq j \leq
 m-1$;
 \item $H \cong {\mathbb{F}}_{m-1}$;
 \item $G = N \rtimes {\mathbb{F}}_{m-1}$;
 \item There exists a  projection $h:G\rightarrow G \quad(h^2=h)$, such that $\im(h)=H$ and
$ \ker(h)=N$.

 \end{enumerate}
\end{thm}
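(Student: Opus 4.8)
The plan is to prove part (1) first and then deduce parts (2)--(4) from it by formal group theory. For part (1) I would read off a presentation of $G/N$ from the presentation of $G$ in Lemma \ref{BFG} using Tietze transformations. Passing to $G/N$ means adjoining the relations $\Gamma=e$ for every $\Gamma\in\Gamma(Q)^c$ together with $M=e$. Using (T$_4$) I would first eliminate all generators in $\Gamma(Q)^c$, so that only the generators $\Gamma_{i_1},\dots,\Gamma_{i_m}$ of the lines through $Q$ remain. The key point is that every relation coming from a point $p\neq Q$ then collapses: two distinct lines meet in at most one point, so at most one line through $p$ can also pass through $Q$, and hence after killing $\Gamma(Q)^c$ the product defining the relations at $p$ reduces either to $e$ or to a single conjugated surviving generator; in both cases the associated commutator relations hold trivially. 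The relations coming from $Q$ are $[\Gamma_{i_j}^{x_j},M]=e$, and since each is a consequence of $M=e$ (the cyclic shifts in Lemma \ref{BFG} are conjugates of $M$), they too may be discarded by (T$_2$). This leaves $G/N\cong\mathbb{F}_m/\langle\langle M\rangle\rangle$, where $\mathbb{F}_m$ is the free group on $\Gamma_{i_1},\dots,\Gamma_{i_m}$ and $M$ now denotes the reduced image of the original element.

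The hard part is to show $\mathbb{F}_m/\langle\langle M\rangle\rangle\cong\mathbb{F}_{m-1}$ with the images of $\Gamma_{i_1},\dots,\Gamma_{i_{m-1}}$ as free generators; this is the only genuinely geometric step. I would prove it by showing that $M$ is a \emph{primitive} element of $\mathbb{F}_m$, i.e.\ part of a free basis, for then $\mathbb{F}_m/\langle\langle M\rangle\rangle$ is free of rank $m-1$. The straight product $\Gamma_{i_1}\Gamma_{i_2}\cdots\Gamma_{i_m}$ is visibly primitive, since $\{\Gamma_{i_1},\dots,\Gamma_{i_{m-1}},\Gamma_{i_1}\cdots\Gamma_{i_m}\}$ is obtained from the standard basis by a single Nielsen transformation and hence is again a basis; killing it therefore yields precisely $\mathbb{F}_{m-1}$ on $\Gamma_{i_1},\dots,\Gamma_{i_{m-1}}$. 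So it suffices to check that $M=\Gamma_{i_1}^{x_1}\cdots\Gamma_{i_m}^{x_m}$ is conjugate in $\mathbb{F}_m$ to this straight product, which I expect to follow from the braid-monodromy description behind Lemma \ref{BFG} and Remark \ref{center}: the defining element attached to a point is conjugate to the ordered product of the meridians of the lines through it. Controlling these conjugating words $x_j$ after the reduction is where I anticipate the real work.

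Granting part (1), part (2) is formal. Consider the composite $f_2\circ f_1\colon H\to G/N$; it sends $\Gamma_{i_j}\mapsto f_2(\Gamma_{i_j})$ for $1\le j\le m-1$, and by (1) these form a free basis of $G/N\cong\mathbb{F}_{m-1}$. Since $H$ is generated by the $m-1$ elements $\Gamma_{i_1},\dots,\Gamma_{i_{m-1}}$, there is a natural surjection $\pi\colon\mathbb{F}_{m-1}\to H$, and $(f_2\circ f_1)\circ\pi$ carries a basis of $\mathbb{F}_{m-1}$ to a basis of $G/N$, so it is an isomorphism. Hence $\pi$ is injective, so $\pi$ is an isomorphism and $H\cong\mathbb{F}_{m-1}$, proving (2); in particular $f_2\circ f_1$ is an isomorphism.

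Finally, because $f_2\circ f_1$ is an isomorphism, $f_1(H)$ intersects $N=\ker f_2$ trivially and the two together generate $G$, so $f_1(H)$ is a complement to the normal subgroup $N$ and $G=N\rtimes H=N\rtimes\mathbb{F}_{m-1}$, which is (3). For (4) I would set $h:=f_1\circ(f_2\circ f_1)^{-1}\circ f_2\colon G\to G$. Then $\im h\subseteq f_1(H)=H$ and $h$ restricts to the identity on $f_1(H)$, so $h^2=h$ and $\im h=H$; moreover $h(g)=e$ exactly when $f_2(g)=e$, that is $\ker h=N$. This yields the required projection and completes the proof.
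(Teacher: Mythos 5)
Your proposal follows essentially the same route as the paper's proof: part (1) by Tietze transformations, eliminating $\Gamma(Q)^c$ via (T$_4$) and observing that the relations at any point $p\neq Q$ collapse because at most one line through $p$ also passes through $Q$; part (2) by pushing a hypothetical relation in $H$ forward under $f_2$ and contradicting the freeness in (1); part (3) from $f_2\circ f_1$ being an isomorphism of $\mathbb{F}_{m-1}$; and part (4) by the standard projection, for which you give the explicit formula $h=f_1\circ(f_2\circ f_1)^{-1}\circ f_2$ where the paper just says it is ``derived directly from (2) and (3).'' The one substantive difference is that the step you isolate as the hard part --- that $\mathbb{F}_m/\langle\langle \widetilde{M}\rangle\rangle\cong\mathbb{F}_{m-1}$ on the images of $\Gamma_{i_1},\dots,\Gamma_{i_{m-1}}$, which requires the reduced word $\widetilde{M}=\Gamma_{i_1}^{\widetilde{x}_1}\cdots\Gamma_{i_m}^{\widetilde{x}_m}$ to be primitive --- is passed over in silence by the paper, which simply writes down the final presentation and equates it with $\mathbb{F}_{m-1}$; since the conjugators $\widetilde{x}_j$ may still involve the surviving generators, one cannot just eliminate $\Gamma_{i_m}$ by (T$_4$), so you are right that something must be said here. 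Your plan (conjugacy of $\widetilde{M}$ to the straight product, hence primitivity by a Nielsen move) is a legitimate way to close this, and the deferred conjugacy can be obtained without controlling the words $x_j$ at all: killing the meridians $\Gamma(Q)^c$ realizes $G/\langle\langle\Gamma(Q)^c\rangle\rangle$ as the fundamental group of the complement of the pencil of the $m$ lines through $Q$ (deleting lines adds exactly the normal closure of their meridians to the kernel), and that group is $\mathbb{F}_{m-1}\times\mathbb{Z}$ with $M$ mapping to a conjugate of the central full product, so killing $\widetilde{M}$ yields $\mathbb{F}_{m-1}$ as required. In short, your attempt is correct at the same level of detail as the paper's own proof, and is in fact more careful about the only genuinely delicate step.
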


\begin{proof}

\begin{enumerate}
\item [(1)]We can present $G/N=\langle\Gamma(Q) ,
\Gamma(Q)^c|R,M,\Gamma(Q)^c\rangle $.
     Denote $\Gamma(Q) ^c=\{z_1,\dots,z_k\}$. By  applying iteratively  Tietze's
    transformation  ($T _4$)  for every $z_i\in\Gamma(Q) ^c$, we
obtain
    $G/N=\langle\Gamma(Q)|\widehat{R}, \widehat{M } \rangle $, where
    $\widehat{R},\widehat{M}$
    are  obtained from $R, M$ by substituting  $e$ for $z_i,$ $1\leq i\leq k$,
     in every other
    element of $R$ and $M$, respectively.

    Let $  v \in \mathcal{P} $ with $v \neq Q$ be an intersection point, and let $r $ be the
    relations induced by $v$.

    Now we can get the following:
\begin{enumerate}
\item[(a)] If there is a line $L$ that goes through both points
$v$ and $Q$  and the generator attached  to $L$ is $\Gamma_{i_j} $
for some $j$, $1 \leq j \leq m$, then the relations $r$ become
$[{\Gamma_{i_j}^x},1,\ldots,1]$ for some $x$. These relations are
trivial.

 \item[(b)] If the points $v$ and $Q$ do not share any line,
then the relations $r$ become $[1,\ldots,1]$, which are also
trivial.
\end{enumerate}
Let us denote by $\widetilde{W}$ the word $W$ after rewriting. Due
to the above observations, using the presentation $ G/N=\langle
\Gamma(Q)|\widetilde{R}, \widetilde{M} \rangle $, then
$$G/N=\langle\Gamma_{i_1},\ldots,\Gamma_{i_m}|[{\Gamma_{i_1}^{\widetilde{x_{i_1}}}},
\ldots{\Gamma_{i_m}^{\widetilde{x_{i_m}}}}],\widetilde{M}\rangle.$$
This is equal to
$$\langle\Gamma_{i_1},\ldots,\Gamma_{i_m}|[{\Gamma_{i_1}^{\widetilde{x_{i_1}}}},\widetilde{M}],\ldots,
[{\Gamma_{i_m}^{\widetilde{x_{i_m}}}},\widetilde{M}],\widetilde{M}\rangle={\mathbb{F}}_{m-1}.$$

 \item[(2)]Assume otherwise. Then there exists a non-trivial word
$w(\Gamma_{i_1},\ldots,\Gamma_{i_{m-1}})= e$. So applying $f_2,$
we get a  non-trivial word  $w(f_2(\Gamma_{i_1}),\ldots,
f_2(\Gamma_{i_{m-1}}))$ which is not possible by (1).

\item[(3)]From the first paragraph, we get that $f_2\circ f_1$ is a
surjective function from $\mathbb{F}_{m-1}$ to $\mathbb{F}_{m-1}$
and therefore it is an isomorphism.

\item[(4)]Derived directly from (2) and (3).
\end{enumerate}
\end{proof}

\medskip

By the last theorem: Let $Q\in\mathcal{P}$, $\Gamma(Q)=\{\Gamma_{i_1},\ldots,\Gamma_{i_m}\}$.
Then there is a projection $h:G\rightarrow G$ such that
${\rm im}(h)=\langle\Gamma_{i_1},\ldots,\Gamma_{i_m}\rangle$ and  ${\rm ker}(
h)$ is the normal closure of $\langle \Gamma(Q)^c,M\rangle$.

By Theorem \ref{ds pro} and Corollary \ref{genAr} we get that $
\exists r,$ $ 1\leq r\leq n,$\ such that \break
$A_r=\{w_1,\ldots,w_{n-1},z_1,\ldots,z_k|k \geq 0 \}$ and $
h=h\circ {\rm pr}_r$, ${\rm pr}_r \circ h$ is a projection,
$\Gamma_i = h(w_i),h(z_i)=e,b_i=\Gamma_i {w_i}^{-1}, h(b_i)=e $
(note that $h$ has the role of $\psi$).
\medskip

\begin{clm}
 $\overline{A_r}={\rm pr}_{\overline{r}}(S(\overline{M}))$.
\end{clm}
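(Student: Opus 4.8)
```latex
The plan is to prove the two inclusions
$\overline{A_r}\subseteq {\rm pr}_{\overline{r}}(S(\overline{M}))$ and
${\rm pr}_{\overline{r}}(S(\overline{M}))\subseteq \overline{A_r}$ by
translating the group-theoretic information about the projection $h$ into
statements about the bilinear form $f$ and its stabilizer $S$. The key
dictionary is that $\overline{x}\in S(\overline{M})$ means
$f(\overline{M},\overline{x})=e_{G_3}$, and by Theorem \ref{SM} this is
equivalent to $\overline{x}$ lying in the subgroup generated by
$\overline{\Gamma(Q)}$ together with $\bigcap_{\Gamma\in\Gamma(Q)}
S(\overline{\Gamma})$. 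On the other side, the generators of $A_r$ supplied by
Corollary \ref{genAr} are $\{w_1,\ldots,w_{n-1},z_1,\ldots,z_k\}$ with
$\Gamma_i=h(w_i)$, $h(z_i)=e$, and $b_i=\Gamma_i w_i^{-1}\in\ker(h)=N$.
Passing to abelianizations, the relation $\Gamma_i=h(w_i)$ says
$\overline{w_i}$ and $\overline{\Gamma_i}$ differ by $\overline{b_i}$ with
$b_i\in N$, so ${\rm pr}_{\overline{r}}$ identifies the $\overline{w_i}$ with
the abelianized generators of $H=\langle\Gamma_{i_1},\ldots,
\Gamma_{i_{m-1}}\rangle$.

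First I would establish the inclusion
${\rm pr}_{\overline{r}}(S(\overline{M}))\subseteq\overline{A_r}$. Take
$\overline{x}\in S(\overline{M})$ and write, as in the proof of Theorem
\ref{SM}, $\overline{x}=\overline{z}\cdot\overline{y}$ with
$\overline{z}\in\langle\overline{\Gamma(Q)}\rangle$ and
$\overline{y}\in\bigcap_{\Gamma\in\Gamma(Q)}S(\overline{\Gamma})$. The point
of that decomposition is that $\overline{\Gamma(Q)}$ maps under
${\rm pr}_{\overline{r}}$ into $\overline{A_r}$ (these are exactly the
abelianized images of the $w_i$ up to the $\overline{b_i}$, which die under
${\rm pr}_{\overline{r}}$ since $b_i\in N$ and $N$ meets $A_r$ only in the
kernel direction), so it suffices to show
${\rm pr}_{\overline{r}}(\overline{y})\in\overline{A_r}$ for the part
$\overline{y}$ coming from $\langle\overline{\Gamma(Q)^c}\rangle$. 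Here I
would use that $h=h\circ{\rm pr}_r$ and that $h$ kills $N$, so any contribution
from $\Gamma(Q)^c$ and from $M$ is projected away; the surviving image is
forced to lie in the $A_r$-component, which is what $\overline{A_r}$ records.

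Conversely, for $\overline{A_r}\subseteq{\rm pr}_{\overline{r}}(S(\overline{M}))$,
I would show that each abelianized generator of $A_r$ is the
${\rm pr}_{\overline{r}}$-image of an element stabilizing $\overline{M}$. For the
generators $\overline{w_i}$: since $h(w_i)=\Gamma_i\in\Gamma(Q)$ and the
elements of $\overline{\Gamma(Q)}$ lie in $S(\overline{M})$ by the first half
of Theorem \ref{SM}, pulling back along the projection ${\rm pr}_r\circ h$ places
$w_i$ in $S(\overline{M})$ up to the kernel, and ${\rm pr}_{\overline{r}}$ returns
$\overline{w_i}$. For the generators $\overline{z_i}$: since $h(z_i)=e$ we
have $z_i\in N$, and I would check directly from the $f$-computation (the
three-case argument of Theorem \ref{SM}, now with $z_i$ built from
$\Gamma(Q)^c$ and $M$) that $\overline{z_i}\in S(\overline{M})$, whence
${\rm pr}_{\overline{r}}(\overline{z_i})=\overline{z_i}\in\overline{A_r}$. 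Since
both families together generate $\overline{A_r}$ and
${\rm pr}_{\overline{r}}(S(\overline{M}))$ is a subgroup, the inclusion follows.

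The main obstacle I anticipate is keeping the bookkeeping between three
different projections straight: the group projection $h$ (with image $H$ and
kernel $N$), the direct-sum projection ${\rm pr}_r$ onto the free factor $A_r$ of
the ambient $\bigl(\bigoplus A_i\bigr)\oplus\mathbb{Z}^l$, and the abelianized
projection ${\rm pr}_{\overline{r}}$. The subtle point is that these are a priori
defined on different groups, and the identifications $\Gamma_i=h(w_i)$,
$b_i=\Gamma_i w_i^{-1}\in N$ are what let me pass cleanly from
$S(\overline{M})\subseteq G/G_2$ to the $A_r$-factor. I expect the delicate
verification to be that the error terms $\overline{b_i}$ genuinely vanish
under ${\rm pr}_{\overline{r}}$, i.e. that the kernel $N$ contributes nothing to
the $A_r$-direction after abelianizing — this is where the structural
hypothesis that $G$ decomposes as a direct sum of free groups, together with
$h=h\circ{\rm pr}_r$, does the real work.
```
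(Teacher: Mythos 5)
There is a genuine gap: your proof never establishes the one fact on which the whole claim pivots, namely ${\rm pr}_r(M)=e$. The paper's argument is short once this is known: pick two distinct generators $\Gamma_{i_{j_1}},\Gamma_{i_{j_2}}\in\Gamma(Q)$; their conjugates commute with $M$ by the relations at $Q$, but $h\bigl([\Gamma^{x_{j_1}}_{i_{j_1}},\Gamma^{x_{j_2}}_{i_{j_2}}]\bigr)\neq e$ in the free group $\im(h)$, and since $h=h\circ{\rm pr}_r$ this non-commutativity survives the projection, so ${\rm pr}_r(\Gamma^{x_{j_1}}_{i_{j_1}})$ and ${\rm pr}_r(\Gamma^{x_{j_2}}_{i_{j_2}})$ are two non-commuting elements of the free group $A_r$ that both commute with ${\rm pr}_r(M)$; in a free group this forces ${\rm pr}_r(M)=e$. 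Consequently $M$ lies entirely in the complementary summand, so \emph{every} $a^*\in A_r$ satisfies $[a^*,M]=e$, giving $\overline{A_r}\subseteq S(\overline{M})$ wholesale; since ${\rm pr}_{\overline{r}}$ fixes $\overline{A_r}$ pointwise, the inclusion $\overline{A_r}\subseteq{\rm pr}_{\overline{r}}(S(\overline{M}))$ is immediate, and the reverse inclusion is trivial because the image of ${\rm pr}_{\overline{r}}$ is $\overline{A_r}$ by definition. (This also means the first half of your proposal, the case analysis for ${\rm pr}_{\overline{r}}(S(\overline{M}))\subseteq\overline{A_r}$ via the decomposition $\overline{x}=\overline{z}\cdot\overline{y}$, is effort spent on the direction that needs no argument at all.)

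Your substitutes for the missing fact do not work. For the generators $z_i$ you assert that $h(z_i)=e$ puts $z_i\in N$ and that a ``direct $f$-computation'' then shows $\overline{z_i}\in S(\overline{M})$; but membership in $N$ (the normal closure of $\Gamma(Q)^c\cup\{M\}$) does \emph{not} imply stabilizing $\overline{M}$: a generator $\Gamma_j\in\Gamma(Q)^c$ lies in $N$, yet $f(\overline{\Gamma_j},\overline{M})$ is in general nontrivial, picking up a commutator in $C_p$ at any multiple point $p$ where the line $l_j$ meets a line through $Q$ — exactly the Case~3 configuration in the proof of Theorem~\ref{SM}. The true reason $\overline{z_i}\in S(\overline{M})$ is that $z_i\in A_r$ and ${\rm pr}_r(M)=e$, i.e.\ the direct-sum structure, not membership in $N$. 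Similarly, the ``delicate verification'' you flag — that ${\rm pr}_{\overline{r}}(\overline{b_i})=\overline{e}$, identifying ${\rm pr}_{\overline{r}}(\overline{\Gamma_i})$ with $\overline{w_i}$ — is unsupported and is not actually available or needed: the paper never cancels these error terms; it only concludes (by combining this claim with Theorem~\ref{SM} and the \emph{next} claim, that ${\rm pr}_{\overline{r}}$ kills $\bigcap_{k}S(\overline{\Gamma_{i_k}})$) that the elements ${\rm pr}_{\overline{r}}(\overline{\Gamma_{i_j}})$ generate $\overline{A_r}$, with no claim that they individually equal the $\overline{w_j}$. So both legs of your generator-by-generator argument rest on assertions that are either unproved or false as stated, and the centralizer argument for ${\rm pr}_r(M)=e$ is the idea you would need to add to repair the proof.
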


\begin{proof}
We start by proving that ${\rm pr}_r(M)=e$: for all $x,y$ and
$j_1\neq j_2$. Since $\Gamma_{i_{j_1}}$ and  $\Gamma_{i_{j_2}}$
are different generators of the  free group ${\rm im}(h)$:
$h([\Gamma^x_{i_{j_1}}\Gamma^y_{i_{j_2}}])\neq e$. We know that
$h=h\circ {\rm pr}_r$, so we get $(h\circ {\rm
pr}_r)([\Gamma^x_{i_{j_1}}\Gamma^y_{i_{j_2}}]) = h (
[\Gamma^x_{i_{j_1}}\Gamma^y_{i_{j_2}}])\neq e$, therefore $ {\rm
pr}_r([\Gamma^x_{i_{j_1}}\Gamma^y_{i_{j_2}}]) \neq e$. Hence, for
$x=x_{j_1},y=x_{j_2}$, $ {\rm pr}_r \left
([\Gamma^{x_{j_1}}_{i_{j_1}},\Gamma^{x_{j_2}}_{i_{j_2}}]\right
)\neq e$. We know that $[\Gamma^{x_{j_1}}_{i_{j_1}},M]=e$, so the
elements ${\rm pr}_r(\Gamma^{x_j}_{i_j}),\quad j=1,2,$ commute
with $M$ but do not
commute with each  other in the free group $A_r$, therefore ${\rm pr}_r(M)=e$. \\
 The next step is to prove that $\overline{A_r}\subseteq S(\overline{M})$. Let $a\in
\overline{A_r}$.  Then there exists $a^*\in A_r$ such that
$\overline{a^*}=a$.  Since ${\rm pr}_r(M)=e$ and $a^*\in A_r$, by the
properties of direct sum $[a^*,M]=e.$  Therefore
 $[a^*,M]/G_3=e_{G_3}$. By definition $f(a,\overline{M} )=e_{G_3}$ which implies that $a\in S(\overline{M})$.

Hence, we have $\overline{A_r}\subseteq {\rm pr}_{\overline{r}}(S(\overline{M}))$.
The opposite inclusion is trivial by definition, and thus $\overline{A_r}={\rm pr}_{\overline{r}}(S(\overline{M}))$.
\end{proof}

\medskip



 By Theorem \ref{SM},  $S(\overline{M})= \left\langle {\left\{
{\overline{\Gamma _{i_1}} ,\dots,\overline{\Gamma _{i_m} }}
\right\} \cup \left\{ {\bigcap _{k = 1}^m {S(\overline{\Gamma _{i_k}} )} }
\right\}} \right\rangle $, therefore:

\begin{eqnarray*}\overline{A_r}&=&{\rm pr}_{\overline{r}}\left(\left\langle {\left\{ {\Gamma
_{i_1} ,\dots,\Gamma _{i_m} } \right\} \cup  {\bigcap _{k =
1}^m {S(\Gamma _{i_k} )} } } \right\rangle \right)
\\&=&\left\langle {\left\{ {\rm pr}_{\overline{r}}({\Gamma
_{i_1}}),\dots,{\rm pr}_{\overline{r}}({\Gamma _{i_m}})
 \right\} \cup  {\rm pr}_{\overline{r}}
 \left(  {\bigcap _{k = 1}^m {S(\overline{\Gamma _{i_k}} )} }\right)
 }\right\rangle.\end{eqnarray*}

\medskip


We claim that the right part of this generating set is trivial:

\begin{clm}
 $$ {\rm pr}_{\overline{r}}\Big(   {\bigcap\limits_{k = 1}^m {S(\Gamma
_{i_k})} } \Big)=\{e\}.$$

\end{clm}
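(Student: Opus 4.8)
The plan is to prove the claim elementwise: since ${\rm pr}_{\overline r}$ is a homomorphism, it suffices to show that every $\overline y\in\bigcap_{k=1}^m S(\overline{\Gamma_{i_k}})$ satisfies ${\rm pr}_{\overline r}(\overline y)=e$. Fix such a $\overline y$ and set $\overline v:={\rm pr}_{\overline r}(\overline y)\in\overline{A_r}$. By Corollary \ref{genAr} and the Basis Theorem, $A_r$ is free with basis $\{w_1,\dots,w_{m-1},z_1,\dots,z_k\}$, where $w_j={\rm pr}_r(\Gamma_{i_j})$ and $h(z_i)=e$; hence $\overline{A_r}$ is free abelian on $\{\overline{w_1},\dots,\overline{w_{m-1}},\overline{z_1},\dots,\overline{z_k}\}$, and $(A_r)_2/(A_r)_3$ is free abelian on the weight-two basic commutators of these generators. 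The goal is to force $\overline v=0$.

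The key step is to transport the hypothesis through the algebraic splitting of $G$. Since $G=\left(\bigoplus_{i=1}^n A_i\right)\oplus\mathbb{Z}^l$ (Theorem \ref{ds pro}), elements of distinct summands commute and the $\mathbb{Z}^l$-part is central, so $[a,b]\equiv\bigoplus_{i=1}^n[{\rm pr}_i(a),{\rm pr}_i(b)]\pmod{G_3}$ for all $a,b\in G$; in particular $G_2/G_3=\bigoplus_{i=1}^n (A_i)_2/(A_i)_3$ and the projection $\Pi_r:G_2/G_3\to(A_r)_2/(A_r)_3$ satisfies $\Pi_r\big(f(\overline a,\overline b)\big)=[{\rm pr}_r(a),{\rm pr}_r(b)]/(A_r)_3$. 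I would apply this with $b=\Gamma_{i_j}$: because $\overline y\in S(\overline{\Gamma_{i_j}})$ gives $f(\overline y,\overline{\Gamma_{i_j}})=e_{G_3}$, applying $\Pi_r$ yields $[{\rm pr}_r(y),w_j]/(A_r)_3=e$ for every $j$, i.e. $\overline v$ pairs trivially with each $\overline{w_j}$ inside $(A_r)_2/(A_r)_3$.

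To finish, write $\overline v=\sum_{j'=1}^{m-1}\alpha_{j'}\overline{w_{j'}}+\sum_i\beta_i\overline{z_i}$. Pairing with $\overline{w_1}$ and expanding by bilinearity (Lemma \ref{additive}) gives $\sum_{j'\neq1}\alpha_{j'}[\overline{w_{j'}},\overline{w_1}]+\sum_i\beta_i[\overline{z_i},\overline{w_1}]=e$; as the commutators $[\overline{w_{j'}},\overline{w_1}]$ $(j'\neq1)$ and $[\overline{z_i},\overline{w_1}]$ are, up to sign, pairwise distinct weight-two basic commutators, the Basis Theorem forces $\alpha_{j'}=0$ for $j'\neq1$ and $\beta_i=0$ for all $i$, so $\overline v=\alpha_1\overline{w_1}$. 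Pairing once more with $\overline{w_2}$—which exists and is a distinct basis element because $Q$ is a multiple point, whence $m\ge3$—gives $\alpha_1[\overline{w_1},\overline{w_2}]=e$ and therefore $\alpha_1=0$, so $\overline v=0$, as required. The step I expect to be the crux is the compatibility $\Pi_r\circ f=\big([\,\cdot\,,\cdot\,]/(A_r)_3\big)\circ({\rm pr}_{\overline r}\times{\rm pr}_{\overline r})$: the point is that it is the algebraic direct-sum decomposition of $G$, not the geometric one $G_2/G_3=\bigoplus_p C_p$, that converts the stabilizer hypothesis into the vanishing of a nondegenerate pairing on the free group $A_r$, after which the Basis Theorem does the rest.
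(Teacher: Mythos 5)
Your proof is correct and takes essentially the same route as the paper: both arguments project the stabilizer conditions onto the free summand $A_r$ via the algebraic splitting $G=\bigl(\bigoplus_i A_i\bigr)\oplus\mathbb{Z}^l$ (so the cross-terms of $f$ vanish and $f$ descends to the commutator pairing on $\overline{A_r}$), then expand in $(A_r)_2/(A_r)_3$ over weight-two basic commutators and invoke Hall's Basis Theorem to force all coefficients to zero. The only difference is organizational: the paper shows ${\rm pr}_{\overline r}(S(\overline{\Gamma_{i_k}}))\subseteq\langle\overline{{\rm pr}_r(\Gamma_{i_k})}\rangle$ for each $k$ separately (lifting $\overline g$ to $g\oplus g_2$ and splitting $f$ into four terms) and then intersects over $k$, whereas you argue elementwise and finish with just the two pairings against $\overline{w_1}$ and $\overline{w_2}$, legitimately using $m\geq 3$ at a multiple point.
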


\begin{proof}
It is known that for any two sets $ A,B$ and a function $F$ that
$F(A \cap B)\subseteq F(A) \cap F(B)$. Therefore ${\rm
pr}_{\overline{{r}}}\left(\bigcap\limits_{k=1}^{m}S(\Gamma_{i_k})\right)\subseteq
\bigcap\limits_{k=1}^{m}[{\rm
pr}_{\overline{{r}}}(S(\Gamma_{i_k}))]$.
\\As we mentioned $A_r=\langle
{\rm pr}_r(\Gamma_1),\ldots ,{\rm pr}_r(\Gamma_{m}),z_1,\ldots,z_q|q\geq
0\rangle $.

\medskip
In other words: $ A_r=\left<\beta_1,\ldots,\beta_{m+q}\right> $
where

   $$\beta _k  = \left\{ \begin{gathered}
  {\rm pr}_r (\Gamma_{i_k} ){\text{    }} \quad k \leqslant m \hfill \\
  z_{k-m} \qquad {\text{            }}m+1 \leqslant k \leqslant m+q {\text{  }} \hfill \\
\end{gathered}  \right.$$

%

Let $\overline{g}\in {\rm
pr}_{\overline{{r}}}(S(\Gamma_{i_k}))\subseteq \overline{A_r}$,
therefore $\overline{g}=\overline{{\beta_1}^{t_1}}\cdots
\overline{{\beta_{q+m}}^{t_{m+q}}}$. Without loss of generality,
we can choose that $g=\beta_1^{t_1}\cdots \beta_{m+q}^{t_{m+q}}$.
By Theorem \ref{ds pro} we know that $$G=A_1\oplus\cdots\oplus
A_r\oplus\cdots\oplus A_n\oplus \mathbb{Z}^\ell.$$

\medskip
 We define $D=A_1\oplus\cdots\oplus \widehat{A_r}\oplus\cdots\oplus
A_n\oplus \mathbb{Z}^\ell$, then $G=A_r\bigoplus D$.


Hence, there exists $g_2\in D$ such that
$\overline{g}\oplus\overline{g_2}\in S(\overline{\Gamma_{i_k}})$.

\medskip


Let $\varphi :G\rightarrow D $ be the natural projection, and define $\eta_k:=\varphi (\Gamma_{i_k})$.
Hence, it implies that $\Gamma_{i_k}={{\rm pr}_r}(\Gamma_{i_k})\oplus\eta_k$.
\medskip

\begin{eqnarray*}e_{G_3}&=&f(\overline{\Gamma_{i_k}},\overline{g\oplus
g_2})=f(\overline{{{\rm pr}_r}(\Gamma_{i_k})\oplus\eta_k},\overline{g \oplus g_2})\\
&=&f(\overline{{{\rm pr}_r}(\Gamma_{i_k})},\overline{g})f(\overline{{{\rm pr}_r}(\Gamma_{i_k})},\overline{g_2})f(\overline{\eta_k},
\overline{g})f(\overline{\eta_k},\overline{g_2}).\end{eqnarray*}


By the definition of $f$, this implies that
$$[\overline{{{\rm pr}_r}(\Gamma_{i_k})},\overline{g}][\overline{{{\rm pr}_r}(\Gamma_{i_k})},\overline{g_2}][\overline{\eta_k},
\overline{g}][\overline{\eta_k},\overline{g_2}]\in G_3.$$

Since ${\rm pr}_r({\Gamma}_{i_k}),g \in A_r$, and $ g_2,\eta_k \in D$,
then $[{\rm pr}_r({\Gamma}_{i_k}),g_2]=[\eta_k ,g]=e_{G_3} $. Therefore
$[{\rm pr}_r({\Gamma}_{i_k}),g][\eta_k ,g_2] \in G_3=(A_r)_3 \oplus D_3
$, which means $ [{\rm pr}_r({\Gamma}_{i_k}),g]\in(A_r)_3 ,[\eta_k
,g_2]\in D_3 $.

The fact that $ (A_r)_2/(A_r)_3= \left\langle
[\beta_i,\beta_j]|1\leq i < j \leq m+q \right\rangle$ is derived
directly from the definition of $ A_r$. Hence, $({A_r})_3\ni
[{\rm pr}_r\left({\Gamma}_{i_k}\right),g]=[\beta_k,g]=[\beta_k,\beta_1^{t_1}\cdots
\beta_{m+q}^{t_{m+q}}]=[\beta_k,\beta_1]^{t_1}\cdots
[\beta_k,\beta_k]^{t_k}\cdots
[\beta_k,\beta_{m+q}]^{t_{m+q}}$.

Note that $[\beta_k,\beta_k]=e$. By \cite{Ha},
$\{[\beta_k,\beta_j]|j\neq k\} $ are generators (or inverses) of
$(A_r)_2/(A_r)_3$ which is a free  abelian group. Since every
generator appears at most once, we get $ t_j=0 \quad $ for all $
j\neq k$ .
\\ As a conclusion, $\overline{g}=\overline{{\beta_k}^{t_k}} \in \langle \overline{\beta_k} \rangle=
\langle \overline{{\rm pr}_r \left({\Gamma}_{i_k}\right)} \rangle$. So
 $\overline{g }\in\langle\overline{ {\rm pr}_{\overline{r}} \left({\Gamma}_{i_k}\right)}\rangle
 $.
\\To summarize, we have shown that if  $ \overline{g }\in \langle
{\rm pr}_{\overline{r} }\left( S({\Gamma}_{i_k})\right)\rangle$
then $\overline{g }\in\langle {\rm pr}_{\overline{r} }\left(
{\Gamma}_{i_k}\right)\rangle$. Therefore, $\langle {\rm
pr}_{\overline{r} }\left( S({\Gamma}_{i_k})\right)\rangle
\subseteq \langle {\rm pr}_{\overline{r}}
\left({\Gamma}_{i_k}\right)\rangle$.
\\
As a conclusion,  $ \bigcap \limits_{k = 1}^m \langle {\rm
pr}_{\overline{r} }\left( S({\Gamma}_{i_k})\right)\rangle
\subseteq \bigcap \limits_{k = 1}^m  \langle {\rm
pr}_{\overline{r}} \left({\Gamma}_{i_k}\right)\rangle={e}$, and
hence $ {\rm pr}_{\overline{r}}\Big(   {\bigcap\limits_{k = 1}^m
{S(\Gamma_{i_k})} } \Big)=\{e\}$ as claimed.
\end{proof}

\bigskip

Since $\Gamma_{i_1}\cdots\Gamma_{i_m}=e$,  $\overline{A_r}= \langle
\overline{\Gamma_{i_1}},\ldots,\overline{\Gamma_{{i_{m-1}}}}\rangle$.
As a result, $\overline{A_r}={\rm pr}_{\overline{r}}\langle { {\Gamma
_{i_1} ,\dots,\Gamma _{i_{m-1}} } \rangle} $. Consequently,
$\rank(A_r)\leq m-1 $.

We know that $A_r=\langle w_1,\ldots,w_{m-1},z_1,\ldots,z_t|t\geq 0\rangle$.
Combining these two facts together and by Corollary \ref{basis}, we get
$A_r= \langle w_1,\ldots,w_{m-1} \rangle$, where \break $y_i=({\rm pr}_r\circ
h)(\Gamma_{i_j}^{x_j})$.


 From Theorem \ref{ds pro}, $h\circ {\rm pr}_r =h$. Since $h$ is a
 projection  $h^2=h$.
 Hence, we get : $${\rm pr}_r \circ h={\rm pr}_r \circ h \circ {\rm pr}_r \circ h=
 {\rm pr}_r \circ h \circ h={\rm pr}_r \circ h ,$$
which means that ${\rm pr}_r \circ h$ is a projection too. Recall
that $$A_r=\langle ({\rm pr}_r \circ h)(\Gamma_{i_1}),\ldots,({\rm
pr}_r \circ h)(\Gamma_{i_{m-1}}) \rangle.$$ Therefore $({\rm pr}_r
\circ h)(w_j)= ({\rm pr}_r \circ h )\circ ({\rm pr}_r \circ
h)(\Gamma_{i_j})=
 ({\rm pr}_r \circ h)(\Gamma_{i_j})=w_j$.
Hence we get that $ ({\rm pr}_r \circ h)(A_r)=A_r$.

Now if $l_i$ is a line that does not pass through the point $Q$
and $\Gamma_i$ is the generator of $l_i$, then $ h(\Gamma_i)=e$
and therefore $({\rm pr}_r \circ h)(\Gamma_i)={\rm pr}_r(e)=e.$

From this investigation, we get the following theorem.

\begin{thm}\label{col}
Let $\Sigma$ be a line arrangement and
$\pi_1({\mathbb{C}}^2-\Sigma) \simeq\mathop  (\bigoplus
_{i = 1}^n A_i ) \oplus \mathbb{Z}^l $
 where $A_i$ is a free group. Then for any multiple point $Q$ in the arrangement
 with $k$ lines, namely, $\{l_1,\ldots,l_k\}$, there exists $r,1\leq r\leq
 n$,and a projection onto $A_r$, $ \varphi_Q :G \rightarrow G$
 such that $A_r=\langle \varphi_Q(\Gamma_{1}),\dots,\varphi_Q(\Gamma_{k})\rangle \cong
 {\F}_{k-1}$. If $l_j$ is a line do not pass through the point, then
$\varphi_Q
  (\Gamma_j)=e$.

 Moreover, if ${p_1, \dots, p_m}$ are the multiple points of
 $ \Sigma$ and $n_i$ is the number of lines pass through the point $p_i$,
 then $G \cong \left( \bigoplus_{i = 1}^m C_i \right) \oplus B$,
 where $C_i \cong {\F}_{n_i-1}$. If $l$
 is a line which does not pass through $p_i$ and let $\Gamma$ be its corresponding
  generator, then ${ \rm pr}_i(\Gamma)=e$ ( where ${ \rm pr}_i$ is the projection onto $C_i$).
\end{thm}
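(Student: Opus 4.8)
The statement has two parts, and the first (pointwise) assertion is essentially a repackaging of the computation carried out just above, so I would begin by collecting those facts. The plan is to fix a multiple point $Q$ with lines $\{l_1,\dots,l_k\}$ and take $\varphi_Q:={\rm pr}_r\circ h$, where $h$ is the projection supplied by Theorem \ref{semidirect} for $Q$ and $r$ is the index obtained by applying Theorem \ref{ds pro} with $\psi=h$. Everything needed is already established: ${\rm pr}_r\circ h$ is a projection, $({\rm pr}_r\circ h)(A_r)=A_r$, $A_r=\langle w_1,\dots,w_{k-1}\rangle\cong\F_{k-1}$ with $\Gamma_{i_a}=h(w_a)$, and $h(\Gamma_j)=e$ --- hence $\varphi_Q(\Gamma_j)=e$ --- for every line $l_j$ missing $Q$. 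Reading off $A_r=\langle\varphi_Q(\Gamma_1),\dots,\varphi_Q(\Gamma_k)\rangle$ then gives the first assertion.

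For the second assertion I would first strengthen the vanishing property so that it holds for the \emph{canonical} direct-sum projection, not merely for $\varphi_Q$. The point is that $h|_{A_r}\colon A_r\to\langle\Gamma_{i_1},\dots,\Gamma_{i_{k-1}}\rangle$ carries the free basis $w_a$ onto the free basis $\Gamma_{i_a}$, hence is an isomorphism and in particular injective. Given any line $l_j$ missing $Q$ we have $h(\Gamma_j)=e$; since $h=h\circ{\rm pr}_r$ this reads $h({\rm pr}_r(\Gamma_j))=e$ with ${\rm pr}_r(\Gamma_j)\in A_r$, so injectivity of $h|_{A_r}$ forces ${\rm pr}_r(\Gamma_j)=e$. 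Thus the canonical projection onto $A_r$ already annihilates every generator of a line not through $Q$.

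Next I would show that the assignment $Q\mapsto r(Q)$ is injective, which is the heart of the global statement. Suppose two distinct multiple points $p_i\neq p_j$ shared the same index $r$. By the previous step ${\rm pr}_r(\Gamma_c)=e$ whenever $l_c$ misses $p_i$, and likewise whenever $l_c$ misses $p_j$; but two distinct points of $\Sigma$ lie on at most one common line, so at most one generator $\Gamma_c$ can survive. As ${\rm pr}_r$ is onto $A_r$ and the $\Gamma_c$ generate $G$, the images ${\rm pr}_r(\Gamma_c)$ generate $A_r$; since at most one of them is nontrivial, $A_r$ has rank at most $1$, contradicting $A_r\cong\F_{n_i-1}$ with $n_i\ge 3$. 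Granting injectivity, I relabel the summands so that $r(p_i)=i$, put $C_i:=A_i$ for $1\le i\le m$ and $B:=\big(\bigoplus_{j>m}A_j\big)\oplus\Z^l$; then $G\cong\big(\bigoplus_{i=1}^m C_i\big)\oplus B$ with $C_i\cong\F_{n_i-1}$, and the required relation ${\rm pr}_i(\Gamma)=e$ for lines off $p_i$ is exactly the strengthened vanishing established above.

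I expect the injectivity step to be the main obstacle. Every earlier ingredient is group theory carried out inside the fixed decomposition of $G$, whereas injectivity of $Q\mapsto r(Q)$ is precisely where the combinatorics of the arrangement --- that two distinct points determine at most one line --- must enter to rule out two multiple points collapsing into a single free factor; pinning down that the relevant projection is the canonical one, so that a rank count is legitimate, is the delicate part.
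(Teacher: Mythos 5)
Your proposal is correct, and for the pointwise assertion it follows the paper's route exactly: the paper gives Theorem \ref{col} no separate proof beyond the sentence ``From this investigation, we get the following theorem,'' the investigation being precisely what you cite --- $h$ from Theorem \ref{semidirect}, the index $r$ from Theorem \ref{ds pro} with $\psi=h$, Corollary \ref{genAr} together with the stabilizer computation (Theorem \ref{SM} and the two Claims) to cut $A_r$ down to $\langle w_1,\ldots,w_{k-1}\rangle\cong\F_{k-1}$, and finally $\varphi_Q={\rm pr}_r\circ h$ with $\varphi_Q(\Gamma_j)=e$ for lines missing $Q$. Where you genuinely add content is the ``moreover'' clause. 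The paper's investigation literally establishes only $({\rm pr}_r\circ h)(\Gamma_j)=e$, whereas the global statement needs the \emph{canonical} projection to vanish, ${\rm pr}_r(\Gamma_j)=e$, and needs distinct multiple points to occupy distinct free summands; the paper asserts both without argument, even though Theorem \ref{main theorem} consumes exactly these two facts. Your supplements are sound and are, in my view, the actual bridge the paper leaves implicit: from $h=h\circ{\rm pr}_r$ you get $h({\rm pr}_r(\Gamma_j))=h(\Gamma_j)=e$ with ${\rm pr}_r(\Gamma_j)\in A_r$, and injectivity of $h|_{A_r}$ kills it --- though to make the injectivity airtight you should note that $h(A_r)=H\cong\F_{k-1}$ forces $\rank(A_r)=k-1$, so the generating set $\{w_a\}$, being of minimal cardinality in a free group, is a basis (equivalently, invoke that finitely generated free groups are Hopfian, so the surjection $h|_{A_r}:A_r\to H$ between free groups of equal rank is an isomorphism). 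Your injectivity of $Q\mapsto r(Q)$ is the one place where the combinatorics of the arrangement enters, and the argument is right: if $p_i\neq p_j$ shared the index $r$, then ${\rm pr}_r(\Gamma_c)=e$ for every line missing either point, so at most the one line through both points contributes, making $A_r$ cyclic --- impossible since $A_r\cong\F_{n_i-1}$ and multiple points have $n_i\geq 3$. So this is not a different route but a correct completion of the paper's summary-style proof, supplying precisely the omitted steps.
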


\section{Main Theorems}
\begin{thm}\label{main theorem}  Let $\Sigma \subseteq {\C}^2$ be a line arrangement which has
no pair of parallel lines. Then if
$$\pi_1({\C}^2-\Sigma)=\mathop  \bigoplus _{i = 1}^r A_i  \oplus
\mathbb{Z}^l, $$ where  $A_i$ are free groups. Then
$\beta(\Sigma)=0$.

\end{thm}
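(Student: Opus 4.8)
The plan is to run a rank computation on the abelianization $\overline{G}=G/G_2$, using the refined decomposition supplied by Theorem~\ref{col}. First I would reduce to the affine picture: since $\Sigma$ has no parallel lines, every affine line meets $L_\infty$ in its own point, so each point of $L_\infty$ lies on exactly one affine line together with $L_\infty$ and is therefore simple. Hence $L_\infty$ passes through no multiple point, and the multiple points of $\Sigma\cup L_\infty$ are exactly those of $\Sigma$; in particular $\beta(\Sigma)$ is the cycle rank of the graph on the affine multiple points. Writing $n$ for the number of lines, $p_1,\dots,p_m$ for the multiple points and $n_i\ge 3$ for the number of lines through $p_i$, the abelianization is $\overline{G}\cong\Z^n$ with basis $\overline{\Gamma_1},\dots,\overline{\Gamma_n}$, while Theorem~\ref{col} gives $G\cong\big(\bigoplus_{i=1}^m C_i\big)\oplus B$ with $C_i\cong\F_{n_i-1}$. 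Abelianizing this isomorphism yields the identity $n=\sum_{i=1}^m(n_i-1)+\rank(\overline{B})$.

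Next I would translate $\sum_i(n_i-1)$ into graph data. Let $I=\sum_i n_i$ be the number of incidences between lines and multiple points; counting incidences line-by-line gives $I=\sum_L t_L$, where $t_L$ is the number of multiple points on $L$. A line carrying $t_L\ge 1$ multiple points contributes $t_L-1$ edges, so $E=\sum_{L:\,t_L\ge1}(t_L-1)=I-n'$, where $n'$ is the number of lines meeting at least one multiple point and $s:=n-n'$ counts the lines lying on no multiple point. Combining these with $\beta(\Sigma)=E-m+c$ (here $c$ is the number of connected components of the graph) and the identity above, I get $\rank(\overline{B})=(n-n')+c-\beta(\Sigma)=s+c-\beta(\Sigma)$. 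Since $\beta(\Sigma)\ge0$ always, it now suffices to prove the lower bound $\rank(\overline{B})\ge s+c$.

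The heart of the argument---and the step I expect to be hardest---is producing $s+c$ independent classes inside $\overline{B}$. Recall $\overline{B}=\ker\big(\bigoplus_i\mathrm{pr}_i\big)$, where $\mathrm{pr}_i:\overline{G}\to\overline{C_i}\cong\Z^{\,n_i-1}$ is the projection from Theorem~\ref{col}, which kills $\overline{\Gamma_L}$ whenever $L\not\ni p_i$ and sends the $n_i$ classes of lines through $p_i$ to generators of $\Z^{\,n_i-1}$ subject to the single relation $\sum_{L\ni p_i}\mathrm{pr}_i(\overline{\Gamma_L})=0$. For each of the $s$ simple lines, $\overline{\Gamma_L}$ is killed by every $\mathrm{pr}_i$ and hence lies in $\overline{B}$. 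For the $c$ components I would show that a class $\sum_L a_L\overline{\Gamma_L}$ lies in $\overline{B}$ exactly when, at each $p_i$, all coefficients $a_L$ with $L\ni p_i$ are equal; since an edge $L$ joining $p_i$ and $p_j$ forces the common value at $p_i$ to agree with that at $p_j$, propagating this equality across a connected component $\kappa$ forces one common coefficient there. Thus the only such classes supported on $\kappa$ are the multiples of the class $u_\kappa$ obtained by summing $\overline{\Gamma_L}$ over all lines through the points of $\kappa$. The subtlety to check is that no line joins multiple points of two different components (otherwise those components would be connected), so the supports of the $u_\kappa$ and of the simple-line classes are pairwise disjoint; being nonzero integer vectors with disjoint supports, these $s+c$ classes are independent in $\Z^n$ and lie in $\overline{B}$.

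Finally I would combine the two facts: $s+c\le\rank(\overline{B})=s+c-\beta(\Sigma)$ forces $\beta(\Sigma)\le0$, whence $\beta(\Sigma)=0$. I expect the genuine work to be concentrated in the coefficient-propagation step and in the careful incidence bookkeeping (handling isolated multiple points and simple lines correctly, and verifying the single relation in each $\overline{C_i}$); the remainder is formal once Theorem~\ref{col} is available.
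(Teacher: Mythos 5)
Your proposal is correct, but it takes a genuinely different route from the paper's. The paper argues by contradiction, locally at a cycle: it picks a minimal cycle $\{p_1,\dots,p_r\}$, invokes Theorem~\ref{col} only at those $r$ points, and passes to the quotient $H=G/N$, where $N$ is generated by the generators of all lines missing the cycle together with the central element $Z=\Gamma_1\cdots\Gamma_n$ of Remark~\ref{center}; killing these leaves a presentation of $H$ on the $b=\sum_{i=1}^r(n_i-1)$ cycle generators with the extra relation $\widetilde{Z}$, so $\rank(\overline{H})\le b-1$, while ${\rm pr}_i(N)=e$ (lines off the cycle die under ${\rm pr}_i$, and $Z$ is central hence projects trivially into the centerless free factor $C_i$) forces all $r$ factors $\overline{C_i}\cong\Z^{n_i-1}$ to survive in $\overline{H}$, giving $\rank(\overline{H})\ge b$ --- a contradiction. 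You instead run a direct global count: Theorem~\ref{col} at all $m$ multiple points gives $\rank(\overline{B})=n-\sum_i(n_i-1)$, your incidence bookkeeping converts this to $s+c-\beta(\Sigma)$, and the $s+c$ disjointly supported classes beat it from below. Both proofs consume exactly the same two inputs --- Theorem~\ref{col} and the centrality of $Z$: your ``single relation'' $\sum_{L\ni p_i}{\rm pr}_i(\overline{\Gamma_L})=0$ is precisely ${\rm pr}_i(Z)=e$, which holds because a nonabelian free group has trivial center, and the kernel of the induced surjection $\Z^{n_i}\to\Z^{\,n_i-1}$ is exactly $\langle(1,\dots,1)\rangle$ because the quotient is torsion-free and $(1,\dots,1)$ is primitive --- so your comparison of $s+c$ with $s+c-\beta(\Sigma)$ is the global avatar of the paper's local $b$ versus $b-1$. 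What your route buys: it is direct rather than by contradiction, requires no choice of minimal cycle, and yields the sharper quantitative conclusion $\rank(\overline{B})=s+c$ with explicit generators (the simple-line classes and the component classes $u_\kappa$), exhibiting $\beta(\Sigma)$ transparently as the Betti number $E-m+c$ of Fan's graph. What the paper's route buys: it avoids all Euler/incidence bookkeeping and the disjoint-support analysis, needing the decomposition only at the cycle points. When writing yours up, do state explicitly (as you sketched) that $L_\infty$ carries no generator of $G$ and, absent parallel lines, no multiple point, so it enters neither $n$ nor $s$ nor the graph data; with that and the two small verifications above, your argument is complete.
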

\begin{proof} Assume by negation that $\beta(\Sigma)\neq 0$ which means
that there is at least one  cycle in the graph of $\Sigma$.
 Let us pick a minimal cycle in the following sense. The  cycle contains $r$  points,
 namely $\{p_1,\ldots,p_r\}$ and $p_i$ is connected in the cycle
only to $p_{i-1}$ and $p_{i+1}$ (the indices are taken modulo $r$).
In other words, a cycle with no sub-cycles.

  By Theorem \ref{col}, we can write $G = (\mathop  \oplus  _{i =
1}^r C_i ) \oplus B_1 $ where $B_1$ is not necessarily abelian.
 If $l$ is a line that does not pass through the points in the cycle
and  let $\Gamma$  be its corresponding generator,
 then $\ {\rm pr}_i(\Gamma)=e,1 \leq i \leq r$.
Define $N= \langle \Gamma_{X_1},...,\Gamma_{X_t}, \Gamma_1 \cdots \Gamma_n \rangle$, where
 $\Gamma_{X_1},...,\Gamma_{X_t} $  are the generators of lines that do not participate in an intersection
point  $p_i,\quad 1 \leq i \leq r$. Denote $Z=\Gamma_1\cdots
  \Gamma_n$. Let $H:=G/N$.

  Let $n_i$ be the number of lines passing through $p_i$.

On one hand, since we have $b:=\sum_{i=1}^r{(n_i-1)}$ lines
participating in the cycle, then if we denote by
$\Gamma_1,\ldots,\Gamma_b$ the generators associated with the
lines participating in the cycle, then $H\cong \langle
\Gamma_1,\ldots,\Gamma_b|\widetilde{R},\widetilde{Z}\rangle $
where $\widetilde{R}$ is the relations which are derived from the
original relations. Therefore, it is easy to see that
$\rank(\overline{H})\leq b-1$.

On the other hand, for  $ 1 \leq i \leq r \mbox{ and } 1 \leq j \leq t ,\quad { \rm pr}_i(\Gamma_{X_j})=e$
and also $\Gamma_1\cdots\Gamma_n\in Z(G) \Rightarrow
{ \rm pr}_i(\Gamma_1\cdots\Gamma_n)=e\Rightarrow { \rm pr}_i(N)=e $ and hence $C_i/N\cong C_i$.
\\  $\Rightarrow H=G/N\cong \left(\bigoplus_{i=1}^r{C_i\oplus
B_1}\right)/N.$
 Since $C_i\cong {\F}_{n_{i}-1}, \overline{C_i} \cong {\Z}^{n_i-1} $.

$
  \overline{H}=(\bigoplus_{i=1}^r{\overline{C_i})\oplus \overline{B_1/N}
 }\\
 \Rightarrow
 \overline{H}=(\bigoplus_{i=1}^r \mathbb{Z}^{n_i-1})\oplus \overline{B_1/N} \cong
 \mathbb{Z}^b
 \oplus \overline{B_1/N}
\\
 \Rightarrow \rank(\overline{H})\geq b,$ a
contradiction.
 \end{proof}

We are now ready to prove the converse of Fan's theorem.

\begin{thm} Let $\Sigma \subseteq \C \mathbb{P}^2$ be a line arrangement. If
$$\pi_1({\C}\mathbb{P}^2-\Sigma)=\mathop  \bigoplus _{i = 1}^r A_i  \oplus
\mathbb{Z}^l, $$ where  $A_i$ are free groups. Then
$\beta(\Sigma)=0$.
 \end{thm}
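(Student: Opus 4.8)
The plan is to reduce the projective statement to the affine Theorem~\ref{main theorem}, which has already been proved. The whole difficulty is bookkeeping: Theorem~\ref{main theorem} is stated for an affine arrangement with \textbf{no} pair of parallel lines, whereas $\beta$ is defined projectively. One cannot simply delete a line of $\Sigma$ and pass to the affine chart, because if the deleted line passes through a multiple point of $\Sigma$ the remaining lines become parallel in $\C^2$. The remedy is to choose the line at infinity generically, so that the passage from $\C\mathbb{P}^2$ to $\C^2$ neither introduces parallel lines nor alters $\beta$, while controlling how $\pi_1$ changes.

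First I would dispose of the degenerate cases in which $\Sigma$ has at most one line, where $\beta(\Sigma)=0$ trivially. Assuming $\Sigma$ nonempty, choose a generic line $H\subseteq\C\mathbb{P}^2$ with $H\notin\Sigma$ that avoids every intersection point of $\Sigma$; such an $H$ exists since there are only finitely many intersection points. In the affine chart $\C^2=\C\mathbb{P}^2-H$ the arrangement $\Sigma$ becomes an affine arrangement $\tilde{\Sigma}$, and since no two lines of $\Sigma$ meet on $H$, no two lines of $\tilde{\Sigma}$ are parallel; thus Theorem~\ref{main theorem} is applicable to $\tilde{\Sigma}$.

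Next I would relate the two fundamental groups. Deconing at $H$ gives the set-theoretic identity $\C^2-\tilde{\Sigma}=\C\mathbb{P}^2-(\Sigma\cup H)$, and because $H$ is generic the classical relation between the complement of a nonempty projective line arrangement and the complement obtained by adding a generic line at infinity (see \cite{OT},\cite{CoSu}) yields $\C^2-\tilde{\Sigma}\simeq(\C\mathbb{P}^2-\Sigma)\times\C^{*}$, hence $\pi_1(\C^2-\tilde{\Sigma})\cong\pi_1(\C\mathbb{P}^2-\Sigma)\times\Z$. Concretely the extra central $\Z$ factor is generated by the product $\Gamma_1\cdots\Gamma_n$ of all meridians, which is central by Remark~\ref{center}. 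Therefore from the hypothesis $\pi_1(\C\mathbb{P}^2-\Sigma)=\bigoplus_{i=1}^{r}A_i\oplus\Z^{l}$ I obtain $\pi_1(\C^2-\tilde{\Sigma})=\bigoplus_{i=1}^{r}A_i\oplus\Z^{l+1}$, again a direct sum of free groups together with a free abelian group. Applying Theorem~\ref{main theorem} to $\tilde{\Sigma}$ then gives $\beta(\tilde{\Sigma})=0$.

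Finally I would verify that this computes $\beta(\Sigma)$. By definition $\beta(\tilde{\Sigma})=\beta(\tilde{\Sigma}\cup L_\infty)=\beta(\Sigma\cup H)$, and since $H$ passes through no intersection point of $\Sigma$ it meets each line of $\Sigma$ in a simple point; hence $H$ contributes neither a vertex nor an edge to the graph, so $G(\Sigma\cup H)=G(\Sigma)$ and therefore $\beta(\Sigma\cup H)=\beta(\Sigma)$. Combining, $\beta(\Sigma)=\beta(\tilde{\Sigma})=0$, as required. The only genuinely nontrivial input is the product decomposition $\pi_1(\C^2-\tilde{\Sigma})\cong\pi_1(\C\mathbb{P}^2-\Sigma)\times\Z$, and this is exactly where I expect the main work (or the main citation) to lie; it is also the point at which the centrality of $\Gamma_1\cdots\Gamma_n$ from Remark~\ref{center} is used to pin down the split-off $\Z$.
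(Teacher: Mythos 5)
Your overall route coincides with the paper's: adjoin an auxiliary line in general position (your $H$, the paper's $L_0$), pass to the affine chart $\C\PP^2-H$ where the resulting arrangement $\tilde\Sigma$ has no parallel lines, apply Theorem~\ref{main theorem}, and note that $H$ meets $\Sigma$ only in double points, so it adds no vertices or edges to the graph and $\beta(\Sigma\cup H)=\beta(\Sigma)$; all of that bookkeeping matches the paper and is fine. The gap is in the one step you yourself flag as the nontrivial input. The asserted homotopy equivalence $\C^2-\tilde\Sigma\simeq(\C\PP^2-\Sigma)\times\C^{*}$ is false in general. Take $\Sigma$ to be a triangle of three lines: then $\C\PP^2-\Sigma\cong(\C^{*})^{2}$, so the right-hand side is $(\C^{*})^{3}$, with $\chi=0$ and $H^{3}\cong\Z$; but $\C^2-\tilde\Sigma$ is the complement of three affine lines in general position, which has $\chi=1$ and, being homotopy equivalent to a $2$-complex, trivial $H^{3}$. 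The genuine classical product statement of this shape is the deconing/Hopf-bundle fact $M(c\Sigma)\cong(\C\PP^2-\Sigma)\times\C^{*}$ for the complement of the \emph{cone} in $\C^{3}$ — equivalently, the affine chart taken at a line \emph{belonging to} $\Sigma$ — not the chart at a generic line $H\notin\Sigma$.

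The $\pi_1$-level consequence you actually use, $\pi_1(\C^2-\tilde\Sigma)\cong\pi_1(\C\PP^2-\Sigma)\times\Z$, is nevertheless true, and this is exactly where the paper argues differently: it picks an arbitrary $L_1\in\Sigma$, works in the chart $\C\PP^2\setminus L_1$, and applies the Oka--Sakamoto product theorem \cite{OkSa} to the union of $\Sigma\setminus L_1$ with the transversal line $L_0$, obtaining $\pi_1(\C\PP^2\setminus(\Sigma\cup L_0))\cong\pi_1(\C\PP^2\setminus\Sigma)\oplus\Z$, which is a direct sum of free groups; it then reads this group as $\pi_1$ of the affine complement in the chart at $L_0$ and invokes Theorem~\ref{main theorem}. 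Your proof becomes correct if you replace the false space-level equivalence either by this citation of \cite{OkSa}, or by a group-theoretic repair along the lines you hint at: $\pi_1(\C\PP^2-\Sigma)$ is the quotient of $G=\pi_1(\C^2-\tilde\Sigma)$ by the central element $Z=\Gamma_1\cdots\Gamma_n$ of Remark~\ref{center}, and since $Z$ maps to the primitive vector $(1,\dots,1)$ in $\overline{G}\cong\Z^{n}$, there is a homomorphism $\phi:G\to\Z$ with $\phi(Z)=1$, whence $G=\ker\phi\times\langle Z\rangle\cong\pi_1(\C\PP^2-\Sigma)\times\Z$. As written, though, the central step rests on a statement that is false for the spaces involved, so it must be reformulated purely at the level of fundamental groups.
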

\begin{proof}
Suppose that $\Sigma \subset \Bbb C\mathbb{P}^2$ is an arrangement
of complex projective lines such that $\pi_1(\Bbb C\mathbb{P}^2
\setminus \Sigma)$ is isomorphic to a direct product of free
groups. Let $L_0\subset \Bbb C\mathbb{P}^2$ be a complex
projective line which is in general position to $\Sigma$. Then
$L_0$ and $\Sigma$ intersect at double points. Choose an arbitrary
complex projective line of $\Sigma$ and denote this line by $L_1$.
Note that $\Bbb C^2 \cong \Bbb C {\PP}^2\setminus L_0 \cong \Bbb
C{\PP}^2\setminus L_1$. By applying the product theorem of Oka and
Sakamoto \cite{OkSa}, we have
$$\pi_1(\Bbb C{\PP}^2 \setminus\left (\Sigma \cup L_0 \right )) = \pi_1(\Bbb (C{\PP}^2
\setminus L_1) \setminus ((\Sigma \setminus L_1) \cup L_0))$$
$$\cong \pi_1(\Bbb (C{\PP}^2
\setminus L_1) \setminus (\Sigma \setminus L_1))\oplus \pi_1(\Bbb
(CP^2 \setminus L_1) \setminus L_0) \cong \pi_1(\Bbb CP^2\setminus
\Sigma) \oplus \Bbb Z.$$ This calculation shows that $\pi_1(\Bbb
C{\PP}^2 \setminus (\Sigma \cup L_0))$ is a product of free group.
Note that
$$\pi_1(\Bbb C{\PP}^2 \setminus (\Sigma \cup L_0)) = \pi_1(\Bbb (CP^2
\setminus L_0) \setminus (\Sigma \setminus L_0)).$$ Note that
$\Sigma \setminus L_0$ is a union of complex lines in the complex
plane $\Bbb CP^2\setminus L_0$ such that no two lines of this
arrangement are parallel.

By Theorem \ref{main theorem}  $\beta (\Sigma \setminus L_0)=0$,
therefore  $\beta (\Sigma)=\beta (\Sigma \setminus L_0)=0$.

\end{proof}

\section{acknowledgements}

We wish to thank Dr. David Garber for the experience he shared
with us and the time he invested in improving the paper.  We are
extremely grateful for his grate efforts in order to make this
paper worthy.

We would also like to thank the anonymous referee for his fruitful
advices.

\end{document}